\newlength{\vslength}
\newcommand{\ER}{{\it ER}}
\newcommand{\ie}{{\it i.e.}}
\newcommand{\cf}{{\it cf.}}
\newcommand{\eg}{{\it e.g.}}
\newcommand{\EE}{{\mathbb E}}
\newcommand{\RR}{{\mathbb R}}
\newcommand{\scrB}{{\mathscr B}}
\newcommand{\scrC}{{\mathscr C}}
\newcommand{\scrE}{{\mathscr E}}
\newcommand{\scrH}{{\mathscr H}}
\newcommand{\scrI}{{\mathscr I}}
\newcommand{\scrL}{{\mathscr L}}
\newcommand{\scrQ}{{\mathscr Q}}
\newcommand{\scrX}{{\mathscr X}}
\newcommand{\ep}{{\epsilon}}
\newcommand{\ctg}{\mathbin{\lhd}}
\newcommand{\ft}[2]{{\textstyle{\frac{#1}{#2}}}}
\newcommand{\conv}[1]%
{{\mathrel{\,\xrightarrow{\widthof{\,#1\,}}\,}}}
\newcommand{\convas}[1]%
{{\mathrel{\,\xrightarrow{\widthof{\,#1\text{-a.s.}\,}}\,}}}
\newcommand{\convprob}[1]%
{{\mathrel{\,\xrightarrow{\widthof{\,#1\,}}\,}}}
\newcommand{\convweak}[1]%
{{\mathrel{\,\xrightarrow{\widthof{\,#1\text{-w.}\,}}\,}}}
\newcommand{\contig}{\mathop{\vartriangleleft}}
\renewcommand{\qedsymbol}{$\Box$}
\newcommand{\closebox}{{\hfill\qedsymbol}}
\newtheoremstyle{customtheorem}
{0.5em}
{0.2em}
{\itshape}
{}
{\scshape}
{}
{1ex}
{}
\theoremstyle{customtheorem}
\newtheorem{theorem}{Theorem}[section]
\newtheorem{lemma}[theorem]{Lemma}
\newtheorem{proposition}[theorem]{Proposition}
\newtheorem{corollary}[theorem]{Corollary}
\newtheorem{definition}[theorem]{Definition}
\newtheoremstyle{customremark}
{0.5em}
{0.2em}
{}
{}
{\scshape}
{}
{1ex}
{}
\theoremstyle{customremark}
\renewenvironment{proof}{\par\noindent{\scshape Proof}\;}{\closebox\par}
\newtheorem{remark}[theorem]{Remark}
\newtheorem{example}[theorem]{Example}
\newcommand{\comment}[1]{{}}
\newcommand{\extra}[1]{{}}
\begin{document}
\thispagestyle{empty}
\title{\vspace*{-16mm}
	Contiguity and remote contiguity\\of some random graphs
}
\author{
	B.~J.~K.~{Kleijn}${}^{1}$ and S.~Rizzelli${}^{2}$\\[1mm]
	{\small\it ${}^{1}$ Korteweg-de~Vries Institute for Mathematics,
		University of Amsterdam}\\
	{\small\it ${}^{2}$ Department of Statistical Sciences, University of Padova}  
}
\date{\today}
\maketitle
\begin{abstract}\noindent
  Asymptotic properties of random graph sequences, like occurrence
  of a giant component or full connectivity in Erd\H os-R\'enyi graphs,
  are usually derived with very specific choices for defining
  parameters. The question arises to which extent those parameters
  choices may be perturbed, without losing the asymptotic property.
  Writing $(P_n)$ and $(Q_n)$ for two sequences of graph
  distributions, asymptotic equivalence (convergence in
  total-variation) and contiguity ($P_n(A_n)=o(1) \implies
  Q_n(A_n)=o(1)$) have been considered by (Janson, 2010) and
  others; here we use so-called remote contiguity (for some fixed
  $a_n\downarrow 0$, $P_n(A_n)=o(a_n) \implies Q_n(A_n)=o(1)$)
  to show that connectivity properties are preserved in more
  heavily perturbed Erd\H os-R\'enyi graphs. The techniques we
  demonstrate with random graphs here, extend to general
  asymptotic properties, e.g. in more complex large-graph limits,
  scaling limits, large-sample limits, etc.\\[.3em]
  {\bf Keywords} random graphs, contiguity, remote contiguity,
    graph connectivity, giant component\\
  {\bf MSC} 05C80, 05C40, 60B10, 60G30.
\end{abstract}
	

\section{Asymptotic properties of random graphs}
\label{sec:intro}

Most asymptotic properties of random graph sequences are
derived in a highly regular setting, based on certain precise
choices for defining parameters. This specificity raises the
question to which extent perturbations of parameters
leave said properties intact. For perturbations of the
Erd\H os-R\'enyi (\ER) graph \citep{Erdos59}, Janson's
seminal paper \citep{Janson10} discusses the question how
an asymptotic property of one \ER\ graph sequence can be
related to that of another, based on asymptotic equivalence
(convergence in total variation) and contiguity
\citep{LeCam60b,LeCam86,Roussas72,Greenwood85}.

In this paper we propose a more general form of asymptotic
congruence called \emph{remote contiguity}, introduced in \citep{Kleijn21}.
While the conditions inducing asymptotic equivalence or
contiguity can sometimes be too stringent, remote contiguity
is applicable more widely. To demonstrate this, we consider
two sequences of random graphs $(X^n)$ and $(Y^n)$, with
distributions denoted by $(P_n)$ and $(Q_n)$ respectively. The
subscript $n$ denotes the number of vertices in the graph: we look
at graphs that grow to infinite size and study conditions under
which some or all of the asymptotic properties of $(X^n)$ also
apply to $(Y^n)$, based on uniform tightness of re-scaled
log-likelihood ratios of $(Q_n)$ with respect to $(P_n)$.

By an \emph{asymptotic graph property}, we mean any sort of
limit in probability: the property is expressed in the form
$P_n(A_n)\to0$, with some sequence of $n$-vertex graph events
$(A_n)$. In the case at hand we consider well-known connectivity
properties of \ER\ graphs with asymptotically bounded
expected degrees, such as occurrence of a giant component in
the supercritical regime and $O(\log(n))$-fragmentation in
the subcritical regime, as well as asymptotic connectedness
in the less sparse regime of \ER\ graphs with expected degrees
that diverge logarithmically. We also consider the so-called
critical window, in which the largest component displays
asymptotic growth of order $O(n^{2/3})$. (For an extensive
review of random graph asymptotics and asymptotic properties
of \ER\ graphs in particular, see \cite{hofstad16}).

\subsection{Perturbations of graph sequences}

To find laws $Q_n$ for random graphs $Y_n$ that share asymptotic
properties with the $X^n\sim P_n$, one may impose the (sufficient)
condition that the Hellinger (or total-variational) distance between
$P_n$ and $Q_n$ goes to zero in the limit (asymptotic equivalence, see
\cite{Janson10}, definition 1.1): then any property of $(X^n)$ is shared by the
graph sequence $(Y^n)$ in the sense that, for any sequence of
$n$-vertex graph events $(A_n)$,
\[
P_n(A_n) - Q_n(A_n)\to0.
\] 
Asymptotic equivalence occurs if and only if there exists a
coupling of $X_n$ and $Y_n$ such that $P(X_n\neq Y_n)$ tends
to zero as $n\to\infty$ (see \cite{Janson10}, theorem~4.2). 

Janson argues that \emph{asymptotic
	equivalence is too strong} as a condition for the sharing of
asymptotic properties: for example, the giant component occurs
in a large family of inhomogeneous versions of the \ER\
graph \citep{Bollobas07}, much larger than the subset of all
asymptotically equivalent graphs. Le~Cam's notion of contiguity
\citep{LeCam60b,LeCam86,LeCam90} is a weaker, more appropriate condition
for the sharing of asymptotic properties: $(Q_n)$ is said to be
\emph{contiguous with respect to $(P_n)$} (notation
$Q_n\contig P_n$) if
\begin{equation}
	\label{eq:defcontiguity}
	P_n(A_n)=o(1) \quad\Rightarrow\quad Q_n(A_n)=o(1),
\end{equation}
for any sequence of $n$-vertex events $(A_n)$. Janson
applies contiguity to sequences of perturbed \ER\ graphs and
demonstrates its wider applicability. (We discuss Janson's
condition for contiguity of inhomogeneous \ER\ graphs in
section~\ref{sec:IERRC}.)

The main point of this paper is that \emph{contiguity is
	still too strong}, if the $P_n(A_n)$ are known to converge
to zero faster than a certain rate $(a_n)$ (rather than
just being $o(1)$). Given a sequence
$a_n\downarrow0$, we say that $(Q_n)$ is
\emph{$a_n$-remotely contiguous with respect to $(P_n)$}
(notation $Q_n\contig a_n^{-1}P_n$) if
\begin{equation}
	\label{eq:defrc}
	P_n(A_n) = o(a_n) \quad\Rightarrow\quad Q_n(A_n)=o(1),
\end{equation}
for any sequence of $n$-vertex events $(A_n)$. Remote
contiguity was introduced in \citep{Kleijn21} for the frequentist
analysis of Bayesian, posterior-based limits, and argued to
offer generalization of contiguity-based statistical arguments
(which typically apply in smooth-parametric (\eg\
local-asymptotically normal, \citep{LeCam60b}) models, to
much more general (\eg\ non-parametric) setting; see
\citep{Kleijn21}, subsection~3.3. In extreme-value theory,
\citep{Falk20} and \citep{Padoan22} use remote contiguity
to generalize a consistency conclusion reached for an idealized
sequence of data-distributions, to the general class of
sequences that are realistic for the data in the problem.

To use remote contiguity with sequences of random graphs, we
look for sequential models $(P_n)$ for $n$-vertex graphs with
asymptotic properties $(A_n)$ and \emph{known} $(a_n)$, as in
(\ref{eq:defrc}), and analyse the family of those sequential graph
distributions $(Q_n)$ that satisfy $Q_n\contig a_n^{-1}P_n$,
to conclude that random graphs distributed according to $(Q_n)$
share the asymptotic property reflected by the events $(A_n)$.
It is noted that for many asymptotic graph properties,
\emph{sharp} rates $(a_n)$ are known (see, \eg, \citep{hofstad16}). 

In section~\ref{sec:connER} we briefly review \ER\ random graphs
to fix notation and we recall Janson's condition for contiguity
of \ER\ graph sequences. In section~\ref{sec:IERRC} we apply remote
contiguity to sequences of inhomogeneous \ER\ graphs, compare with
Janson's condition and formulate a weaker, Lindeberg-type
condition that involves the rate sequence $(a_n)$ (which, in most
cases, is not only sufficient but also necessary).
In section~\ref{sec:ERconnectivity} 
we define so-called \emph{remotely contiguous domains of attraction}
for all aforementioned asymptotic connectivity properties of \ER\ graphs.
Section~\ref{sec:concdisc} summarizes conclusions and discusses possible
directions of further research. Details on remote contiguity are
discussed in \citep{Kleijn21} and summarized in appendix~\ref{app:rc}.

The ultimate goal of this paper is to convince the reader
that remote contiguity provides a meaningful generalization of
the notion of contiguity, applicable to a much wider range of
problems. We remark that, like asymptotic equivalence and
contiguity, remote contiguity compares \emph{any} pair of
sequences of probability distributions (including (but not
limited to) the comparison of \ER\ graph distributions).



\section{Homogeneous and inhomogeneous \ER\ graphs}
\label{sec:connER}

Let $G_n=([n],E_n)$ denote the complete graph with $n$ vertices, with
vertex labels from $[n]:=\{1,2,\ldots,n\}$ and edge set $E_n$ (which
does not include self-loops; the edge between vertices $i$ and
$j$ is denoted $(ij)$). Denote the space of all sub-graphs of $G_n$
by $\scrX_n$. The homogeneous \ER\ random graph with
edge-probability $0<p<1$, a random element of $\scrX_n$
denoted $X^n$, is $([n],E'_n)$ where $E'_n$ contains any $e\in E_n$
independently with probability $p$. The presence or absence of an
edge $e=(ij)$ from $E_n$ in the graph $X^n$ is expressed in terms
of (independent) random variables, $X^n_{ij}=1$ or $X^n_{ij}=0$
respectively. The degree of vertex $i$ is denoted by
$D_i(X^n)$. We denote the distribution of $X^n$ with
edge-probability $p$, by $P_{p,n}$. When we speak of an ``\ER\
graph (with edge probability $p$)'' below, we refer to the
\emph{sequence of distributions} $(P_{p,n})$, as an element
of $\prod_n M^1(\scrX_n)$, and we denote the class of all \ER\
graphs as $\scrE$. We generalize from $\scrE$ in two stages: 
we distinguish the class $\scrH\subset\prod_n M^1(\scrX_n)$ of
all \emph{homogeneous \ER\ graphs}, containing all sequences $(Q_n)$ of
the form $Q_n=P_{p_n,n}$, for some $n$-dependent $0\leq p_n\leq1$;
and we generalize further by considering the class
$\scrI\subset\prod_n M^1(\scrX_n)$ of all \emph{inhomogeneous \ER\
	graphs}, in which the probability for occurrence of an edge may
depend on the vertices it connects (see below).

For the asymptotic properties of homogeneous \ER\ graphs,
the $n$-dependence of the $(p_n)$ plays a central role:
prime example is the sequence of \ER\ graphs with edge probabilities
$p_n=\lambda/n$ ($\lambda<1$, $\lambda=1$ and $\lambda>1$
characterize the so-called \emph{subcritical, critical and
	supercritical regimes}). With a slight abuse of notation, we denote the distributions of these
graphs with $P_{\lambda,n}$. In some cases, we also leave room for
further $n$-dependence, \eg\ \ER\ graphs $Y_n\sim P_{\lambda_n,n}$.

The inhomogeneous \ER\ random graph with edge-probabilities
$0<p_{n,ij}<1$, ($1\leq i < j \leq n$), is
$([n],F_n)$ where $F_n$ contains any $e=(ij)\in E_n$ independently
with probabilities $p_{n,ij}$; we denote the distribution of the
inhomogeneous \ER\ graph $Y^n$ with edge-probabilities
$(p_{n,ij}:1\leq i < j \leq n)$, by $P_{(p_{n,ij}),n}$. The presence or
absence of an edge $e=(ij)$ from $E_n$ in $F_n$ is expressed
in terms of (independent) random variables, $Y^n_{ij}=1$ or
$Y^n_{ij}=0$ respectively; write $Y^n_{ij}\sim P_{p,n,ij}$ and
$P_{(p_{n,ij}),n}=\prod_{i<j}P_{p,n,ij}$. 
Of foremost interest to this work are edge probabilities of the form
$p_{n,ij}=\mu_{n,ij/n}$, for  which it is assumed throughout that 
\begin{equation}\label{eq:limitbound}
	\lim_{n \to \infty} \sup_{i<j} \frac{\mu_{n,ij}}{n}  <1,
\end{equation}
\ie\ that edge probabilities are uniformly bounded away from one
(compare with the bound $p_{ij,n}<0.9$ in Janson's theorem,
thm.~\ref{thm:janson10} below). For later reference, we define
the parameter spaces $\Lambda_n=\RR^{\ft12n(n-1)}$ for all $n\geq1$,
with inner-product norm,
\[
\|\lambda_n-\mu_n\|_{2,n}^2
=\sum_{i<j}(\lambda_{n,ij}-\mu_{n,ij})^2.
\]

In what follows, we shall examine to which extent known properties
of \ER\ graphs, like the occurrence of a \emph{giant component} or
a \emph{critical window}, are shared in the classes of homogeneous
and inhomogeneous \ER\ graphs (like the \emph{stochastic block model}
that is central in network theory). For a general review, see
\citep{Bollobas07}; for more in relation to contiguity, see
\citep{Janson10}, examples~3.1,~3.5 and~3.6; for other possibilities,
see his remark~1.6 and \citep{Janson95}.
For the following theorem and lemma, consider two inhomogeneous
\ER\ graphs distributed according to $P_n=P_{(p_{n,ij}),n}$
and $Q_n=P_{(q_{n,ij}),n}$. Restricted to contiguity and transcribed
into our notation, Janson's corollary~2.12 says the following.
\begin{theorem}
	\label{thm:janson10}
	{\citep{Janson10}} Assume that $\sup_{i<j}p_{n,ij}<0.9$. If,
	\begin{equation}
		\label{eq:janson10}
		\sum_{i,j}\frac{(p_{n,ij}-q_{n,ij})^2}{p_{n,ij}}=O(1),
	\end{equation}
	then $Q_n\contig P_n$.
\end{theorem}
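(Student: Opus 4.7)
The plan is to establish $Q_n \contig P_n$ via the classical second-moment criterion: if $\sup_n \EE_{P_n}[(dQ_n/dP_n)^2] < \infty$, then Cauchy--Schwarz gives
\[
Q_n(A_n) = \EE_{P_n}\!\bigl[1_{A_n}(dQ_n/dP_n)\bigr] \le \bigl(\EE_{P_n}[(dQ_n/dP_n)^2]\bigr)^{1/2} P_n(A_n)^{1/2},
\]
so that $P_n(A_n)=o(1)$ forces $Q_n(A_n)=o(1)$. Everything therefore reduces to bounding this second moment using (\ref{eq:janson10}). First I would note that finiteness of the sum in (\ref{eq:janson10}) already forces $q_{n,ij}=0$ whenever $p_{n,ij}=0$, so $Q_n\ll P_n$ and the likelihood ratio is well-defined.

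Next, edge-independence yields $dQ_n/dP_n=\prod_{i<j}dP_{q_{n,ij},n,ij}/dP_{p_{n,ij},n,ij}$, and a direct Bernoulli computation gives the per-edge identity
\[
\EE_P\bigl[(dQ/dP)^2\bigr] = \frac{q^2}{p}+\frac{(1-q)^2}{1-p} = 1+\frac{(p-q)^2}{p(1-p)}.
\]
Hence
\[
\EE_{P_n}\bigl[(dQ_n/dP_n)^2\bigr]
=\prod_{i<j}\Bigl(1+\frac{(p_{n,ij}-q_{n,ij})^2}{p_{n,ij}(1-p_{n,ij})}\Bigr).
\]

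Finally, I would apply $1+x\le e^x$ together with the standing assumption $\sup_{i<j}p_{n,ij}<0.9$, which gives $1-p_{n,ij}>0.1$ and hence
\[
\EE_{P_n}\bigl[(dQ_n/dP_n)^2\bigr]
\le \exp\Bigl(10 \sum_{i<j}\frac{(p_{n,ij}-q_{n,ij})^2}{p_{n,ij}}\Bigr) = O(1)
\]
by (\ref{eq:janson10}). Plugging this back into the Cauchy--Schwarz bound completes the proof.

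There is no substantive obstacle beyond bookkeeping: the role of the hypothesis $\sup p_{n,ij}<0.9$ is precisely to allow $p(1-p)$ in the denominator to be replaced, up to a universal constant, by $p$, thereby matching the form of the assumed $\chi^2$-type bound (\ref{eq:janson10}). Without this uniform upper bound the argument would break down, since edge probabilities approaching $1$ would let $1-p_{n,ij}$ vanish; this is why Janson's formulation must exclude that regime (or handle it by a symmetric complement argument interchanging edges and non-edges).
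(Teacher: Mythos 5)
Your proof is correct, but it takes a genuinely different route from the paper. The paper does not prove Theorem~\ref{thm:janson10} at all — it is quoted from Janson (2010, Cor.~2.12) — and the paper's closest analogue is the proposition in section~\ref{sub:IERRCsuff}, which recovers the same conclusion (contiguity, via $Q_n\contig a_n^{-1}P_n$ for every $a_n$) by decomposing $\log(dP_n/dQ_n)=-\sum_{i<j}(k_{n,ij}Y^n_{ij}+l_{n,ij})$ into independent summands under $Q_n$, separately bounding the $Q_n$-variance $\sum_{i<j}k_{n,ij}^2 q_{n,ij}(1-q_{n,ij})=O(1)$ and the $Q_n$-mean (the Kullback-Leibler divergence), and then invoking uniform tightness of the log-likelihood ratios via lemma~\ref{lem:rcfirstlemma}. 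Your argument instead computes the $\chi^2$-type second moment $\EE_{P_n}[(dQ_n/dP_n)^2]$ under $P_n$ exactly — the per-edge identity $q^2/p+(1-q)^2/(1-p)=1+(p-q)^2/(p(1-p))$ is correct, as are the $1+x\le e^x$ step and the use of $\sup_{i<j}p_{n,ij}<0.9$ to replace $p(1-p)$ by $p$ up to a factor $10$ — and closes directly with Cauchy--Schwarz, giving the quantitative bound $Q_n(A_n)\le C\,P_n(A_n)^{1/2}$. Your route is shorter and self-contained for proving contiguity; the paper's heavier route is the one it actually needs, because the same variance/mean bookkeeping on the log-likelihood ratio is what generalizes (via the Lindeberg--Feller normalization by $s_n$) to lemmas~\ref{lem:IERRC} and~\ref{lem:ER}, where $s_n\to\infty$ and the second moment $\EE_{P_n}[(dQ_n/dP_n)^2]$ diverges, so a direct $L^2(P_n)$ bound would no longer give anything.
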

(Janson's corollary~2.12 also specifies that $P_n$ and $Q_n$
are asymptotically equivalent if the sum in (\ref{eq:janson10}) is $o(1)$
rather than $O(1)$ but that fact plays no role in what follows.)


\section{Remote contiguity of inhomogeneous \ER\ graphs}
\label{sec:IERRC}

The application of remote contiguity to the \ER\ graph involves
sufficient conditions (considered in subsection~\ref{sub:IERRCsuff}), and
necessary conditions (considered in subsection~\ref{sub:IERRCnecc}).

\subsection{Sufficient conditions for remote contiguity of \ER\ graphs}
\label{sub:IERRCsuff}

To extend the results of \citep{Janson10} to remotely
contiguous random graphs, consider the likelihood-ratio with observation
$Y^n$, which is that of $\ft12n(n-1)$ independent Bernoulli experiments:
\begin{equation}
	\label{eq:likratioIERG}
	\begin{split}
		\frac{dP_n}{dQ_n}(Y^n)
		&= \prod_{1\leq i<j\leq n}
		\Bigl(\frac{p_{n,ij}}{q_{n,ij}}\Bigr)^{Y^n_{ij}}
		\Bigl(\frac{1-p_{n,ij}}{1-q_{n,ij}}\Bigr)^{1-Y^n_{ij}}\\
		&= \prod_{1\leq i<j\leq n}
		\Bigl(\frac{p_{n,ij}}{1-p_{n,ij}}
		\frac{1-q_{n,ij}}{q_{n,ij}}\Bigr)^{Y_{n,ij}}
		\Bigl( \frac{1-p_{n,ij}}{1-q_{n,ij}} \Bigr)\\
		&= \exp\Bigl(-\sum_{i<j}\bigl(k_{n,ij}Y^n_{ij} + l_{n,ij}\bigr)\Bigr),
	\end{split}
\end{equation}
where,
\[
k_{n,ij} = \log\Bigl(\frac{q_{n,ij}(1-p_{n,ij})}{p_{n,ij}(1-q_{n,ij})}\Bigr),
\quad
l_{n,ij} = \log\Bigl( \frac{1-q_{n,ij}}{1-p_{n,ij}} \Bigr).
\]
Before stating the lemma, we note that the Kullback-Leibler divergences
of $P_n$ with respect to $Q_n$ equal
\begin{equation}
	\label{eq:QnPnKLdiv}
	-\EE_{Q_n}\log\frac{dP_n}{dQ_n}
	= \sum_{i<j} \bigl(k_{n,ij}q_{n,ij}+l_{n,ij}\bigr)\geq 0.
\end{equation}

\begin{lemma}
	\label{lem:UT}
	If we write $\Delta_n=-\EE_{Q_n}\log(dP_n/dQ_n)+\log(a_n)$, and
	for every $\ep>0$ there is an $M>0$ such that, for large enough
	$n$,
	\begin{equation}
		\label{eq:uniftightunnormalized}
		Q_n\biggl(
		\sum_{i<j}k_{n,ij}(Y^n_{ij}-q_{n,ij})> \log(M)-\Delta_n
		\biggr)<\ep,
	\end{equation}
	then $Q_n\contig a^{-1} P_n$.
\end{lemma}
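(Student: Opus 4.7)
The plan is to rewrite the hypothesis (\ref{eq:uniftightunnormalized}) as uniform tightness, under $Q_n$, of the rescaled likelihood ratio $a_n\,dQ_n/dP_n$, and then to deduce the defining implication (\ref{eq:defrc}) of $Q_n\contig a_n^{-1}P_n$ by a standard truncation argument on $Q_n(A_n) = \int_{A_n} (dQ_n/dP_n)\,dP_n$.

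First I would decompose the log-likelihood. From (\ref{eq:likratioIERG}),
\[
\log\frac{dQ_n}{dP_n}(Y^n) = \sum_{i<j}\bigl(k_{n,ij}Y^n_{ij}+l_{n,ij}\bigr) = \sum_{i<j}k_{n,ij}(Y^n_{ij}-q_{n,ij}) + \sum_{i<j}\bigl(k_{n,ij}q_{n,ij}+l_{n,ij}\bigr),
\]
where by (\ref{eq:QnPnKLdiv}) the second sum equals $-\EE_{Q_n}\log(dP_n/dQ_n)$. Adding $\log(a_n)$ on both sides and using the definition of $\Delta_n$, the event inside (\ref{eq:uniftightunnormalized}) becomes exactly $\{a_n\,dQ_n/dP_n(Y^n)>M\}$. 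Thus the hypothesis is nothing but uniform tightness of $a_n\,dQ_n/dP_n$ under $Q_n$.

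Next I would carry out the truncation argument. The assumption (\ref{eq:limitbound}) together with $0<p_{n,ij},q_{n,ij}<1$ makes $P_n$ and $Q_n$ mutually absolutely continuous on $\scrX_n$, so for any sequence of events $(A_n)$ and any $M>0$,
\[
Q_n(A_n) = \int_{A_n} \frac{dQ_n}{dP_n}\,dP_n \leq \frac{M}{a_n}\,P_n(A_n) + Q_n\!\left(a_n\frac{dQ_n}{dP_n}>M\right).
\]
Now fix $\ep>0$ and suppose $P_n(A_n)=o(a_n)$. By the tightness established above, choose $M$ so that the second summand is less than $\ep/2$ for all large $n$; for that fixed $M$, the first summand is $(M/a_n)\cdot o(a_n)=o(1)$, hence also below $\ep/2$ eventually. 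Thus $Q_n(A_n)<\ep$ for $n$ large, which is exactly $Q_n(A_n)=o(1)$.

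The only real obstacle is recognizing the equivalence in the first step, namely that the prescribed centring by $\sum_{i<j}k_{n,ij}q_{n,ij}$ and the shift by $\log(a_n)-\log(M)$ inside (\ref{eq:uniftightunnormalized}) are precisely what is needed to rewrite the event as $\{a_n\,dQ_n/dP_n>M\}$; once this is observed, the subsequent Markov-plus-truncation step is the standard mechanism that converts uniform tightness of a rescaled likelihood ratio into remote contiguity, as developed in \citep{Kleijn21} (cf.\ appendix~\ref{app:rc}).
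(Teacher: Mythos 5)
Your proof is correct and follows essentially the same route as the paper's: both identify the event in (\ref{eq:uniftightunnormalized}) with $\{a_n\,dQ_n/dP_n > M\}$, i.e.\ uniform tightness of $a_n(dP_n/dQ_n)^{-1}$ under $Q_n$, and then conclude remote contiguity. The only cosmetic difference is that the paper then invokes lemma~\ref{lem:rcfirstlemma}-\textit{(ii)} for the final step, whereas you reproduce inline the Markov/truncation argument that underlies that lemma.
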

\begin{proof}
	Consider the application of lemma~\ref{lem:rcfirstlemma}-{\it (ii)} to
	the likelihood ratios (\ref{eq:likratioIERG}), with $Y^n\sim Q_n$: for
	given $\ep>0$, let $M>0$ be as in (\ref{eq:uniftightunnormalized}),
	then,
	\[
	Q_n\biggl(
	\sum_{i<j}\bigl(k_{n,ij}Y^n_{ij} + l_{n,ij}\bigr)+\log(a_n) > \log(M)
	\biggr) < \ep,
	\]
	which we may re-write to
	\[
	Q_n\biggl( a_n \Bigl( \frac{dP_n}{dQ_n}(Y^n) \Bigr)^{-1} > M \biggr)
	< \ep,
	\]
	for large enough $n\geq1$.
\end{proof}

Clearly of primary concern is the connection with \citep{Janson10}:
the following proposition illustrates how Janson's corollary~2.12
(represented in abridged form in theorem~\ref{thm:janson10})
relates to remote contiguity.
\begin{proposition}
	Assume that $\sup_{i<j} p_{n,ij} < C/(1+C)$, for some $C>0$ and that
	Janson's condition~(\ref{eq:janson10}) holds. Then,
	\[
	Q_n \contig a^{-1} P_n,
	\]
	for any $a_n\to0$. 
\end{proposition}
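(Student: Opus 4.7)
The plan is to verify the sufficient condition of Lemma~\ref{lem:UT} via Chebyshev's inequality applied to the centered log-likelihood ratio. Decompose
\[
-\log\frac{dP_n}{dQ_n}(Y^n) \;=\; Z_n + \Delta_{KL,n},
\]
where $Z_n := \sum_{i<j} k_{n,ij}(Y^n_{ij}-q_{n,ij})$ is a sum of independent, mean-zero random variables under $Q_n$, and $\Delta_{KL,n} := \sum_{i<j}(k_{n,ij}q_{n,ij}+l_{n,ij})$ is the Kullback--Leibler divergence of $Q_n$ with respect to $P_n$ from~(\ref{eq:QnPnKLdiv}). Then~(\ref{eq:uniftightunnormalized}) reads $Q_n\bigl(Z_n > \log M + \log(1/a_n) - \Delta_{KL,n}\bigr) < \ep$. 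Since $a_n\to 0$, the threshold diverges to $+\infty$ as soon as $\Delta_{KL,n}$ stays bounded, so it suffices to show (i)~$\Delta_{KL,n}=O(1)$ and (ii)~$\var_{Q_n}(Z_n)=O(1)$, whereupon Chebyshev's inequality yields the tightness of $Z_n$ required by the lemma.

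For (i), I would apply the standard Bernoulli bound $KL(q\|p) \leq \chi^2(q\|p) = (p-q)^2/(p(1-p))$, together with the lower bound $1-p_{n,ij}\geq 1/(1+C)$ implied by $p_{n,ij}\leq C/(1+C)$, to obtain
\[
\Delta_{KL,n} \;\leq\; (1+C)\sum_{i<j}\frac{(p_{n,ij}-q_{n,ij})^2}{p_{n,ij}} \;=\; O(1)
\]
by Janson's hypothesis~(\ref{eq:janson10}).

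For (ii), I would use the per-edge identity $k^2 q(1-q) = \var_{Q}[\log(dP/dQ)] \leq \EE_Q[(\log dP/dQ)^2]$, with the last expectation equal to $q(\log(q/p))^2 + (1-q)(\log((1-q)/(1-p)))^2$. The elementary scalar inequality $t(\log t)^2 \leq (t-1)^2$ for all $t>0$ (a rewriting of $\sinh(u/2)\geq u/2$ under $t=e^u$) applied with $t=q/p$ and $t=(1-q)/(1-p)$ then yields, using $1-p_{n,ij}\geq 1/(1+C)$ and $p_{n,ij}\leq 1$,
\[
k_{n,ij}^2\,q_{n,ij}(1-q_{n,ij}) \;\leq\; \frac{(p_{n,ij}-q_{n,ij})^2}{p_{n,ij}} + \frac{(p_{n,ij}-q_{n,ij})^2}{1-p_{n,ij}} \;\leq\; (2+C)\,\frac{(p_{n,ij}-q_{n,ij})^2}{p_{n,ij}}.
\]
Summing and invoking Janson's condition once more gives $\var_{Q_n}(Z_n)=O(1)$.

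The technical heart of the argument is the per-edge variance bound in step~(ii), which reduces to the scalar inequality $t(\log t)^2\leq (t-1)^2$; I do not expect any deeper obstacle. Combining (i), (ii), and Chebyshev verifies the hypothesis of Lemma~\ref{lem:UT} for every fixed $M>0$ and all sufficiently large $n$, and hence $Q_n\contig a_n^{-1}P_n$ for every $a_n\to 0$.
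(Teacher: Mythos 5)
Your proof is correct and follows essentially the same strategy as the paper: bound $\var_{Q_n}(Z_n)=\sum_{i<j}k_{n,ij}^2 q_{n,ij}(1-q_{n,ij})$ and the Kullback--Leibler divergence both by constant multiples of Janson's sum, observe that this makes $-\Delta_n\to\infty$ for every $a_n\to0$, and invoke Lemma~\ref{lem:UT} together with Chebyshev. The only cosmetic difference is in the variance step, where the paper first bounds $k_{n,ij}^2$ by $4\bigl(\log(q/p)\bigr)^2+4\bigl(\log((1-q)/(1-p))\bigr)^2$ and then applies $(\log y)^2\leq(y-1)^2/y$, whereas you identify $k^2q(1-q)$ as $\var_{Q}[\log(dP/dQ)]$, bound it by the second moment, and apply the equivalent inequality $t(\log t)^2\leq(t-1)^2$; the two are the same inequality under $y=t$, your arrangement just yields a slightly smaller constant ($2+C$ versus $4(1+C)$), which is immaterial here.
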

\begin{remark}
	The assertion of the above proposition, $Q_n \contig a^{-1} P_n$
	for \emph{any rate $a_n$}, is equivalent to contiguity,
	$Q_n\contig P_n$, the assertion of theorem~\ref{thm:janson10}.
\end{remark}
\begin{proof}
	According to (\ref{eq:uniftightunnormalized}), remote contiguity
	revolves around control of tail probabilities for the sequence
	$\sum_{i<j} k_{n,ij}(Y_{ij}^n -q_{n,ij})$. A sufficient condition
	for uniform tightness is
	\[
	\EE_{Q_n}\biggl(\sum_{i<j} k_{n,ij}(Y_{ij}^n-q_{n,ij})\biggr)^2=O(1),
	\]
	which we prove below. Note that
	\begin{equation}
		\EE_{Q_n}\biggl(\sum_{i<j} k_{n,ij}(Y_{ij}^n -q_{n,ij})\biggr)^2
		= \sum_{i<j} k_{n,ij}^2q_{n,ij}(1-q_{n,ij})
	\end{equation}
	and that
	\[
	\begin{split}
		k_{n,ij}^2 &\leq
		4 \max \biggl\{\Bigl| \log\Bigl(\frac{q_{n,ij}}{p_{n,ij}}\Bigr) \Bigr|,
		\Bigl| \log\Bigl(\frac{1-q_{n,ij}}{1-p_{n,ij}}\Bigr) \Bigr|\biggr\}^2\\[2mm]
		&\leq 4\biggl( \log\Bigl(\frac{q_{n,ij}}{p_{n,ij}}\Bigr) \biggr)^2
		+ 4 \biggl( \log\Bigl(\frac{1-q_{n,ij}}{1-p_{n,ij}}\Bigr) \biggr)^2
	\end{split}
	\]
	and using the inequality $(\log(y))^2 \leq (y-1)^2/y$ either with
	$y= q_{n,ij}/p_{n,ij}$ or with $y=(1-q_{n,ij})/(1-p_{n,ij})$, we
	obtain
	\[
	\begin{split}
		k_{n,ij}^2 &\leq 4 \Bigl(\frac{q_{n,ij}}{p_{n,ij}} -1 \Bigr)^2
		\frac{p_{n,ij}}{q_{n,ij}}
		+ 4 \Bigl(\frac{1-q_{n,ij}}{1-p_{n,ij}} -1 \Bigr)^2
		\frac{1-p_{n,ij}}{1-q_{n,ij}}\\[2mm]
		&= 4\frac{(p_{n,ij} -q_{n,ij} )^2 }{p_{n,ij}} \frac{1}{q_{n,ij}}
		+ 4\frac{(p_{n,ij} -q_{n,ij} )^2 }{1-p_{n,ij}} \frac{1}{1-q_{n,ij}}.
	\end{split}
	\]
	Substituting and using that $\sup_{i<j} p_{n,ij} < C/(1+C)$, we find
	\[
	\begin{split}
		\sum_{i<j} k_{n,ij}^2 q_{n,ij}(1-q_{n,ij})
		&\leq 4 \sum_{i<j} \frac{(p_{n,ij} -q_{n,ij} )^2 }{p_{n,ij}} (1-q_{n,ij})
		+ 4 \sum_{i<j} \frac{(p_{n,ij} -q_{n,ij} )^2 }{1-p_{n,ij}} q_{n,ij}\\
		&\leq 4(1+C) \sum_{i<j} \frac{(p_{n,ij} -q_{n,ij} )^2 }{p_{n,ij}}
	\end{split}
	\]
	and under Janson's condition (\ref{eq:janson10}) the term on the
	right-hand side is $O(1)$. In addition, in view of the inequality
	$\log(y) \leq (y-1)$, it holds that
	\[
	\begin{split}
		-\EE_{Q_n}\log\frac{dP_n}{dQ_n}
		&= \sum_{i<j} \log\Bigl(\frac{q_{n,ij}}{p_{n,ij}} \Bigr) q_{n,ij}
		+ \sum_{i<j} \log\Bigl(\frac{1-q_{n,ij}}{1-p_{n,ij}} \Bigr)(1-q_{n,ij})\\
		&\leq \sum_{i<j} \Bigl(\frac{q_{n,ij}}{p_{n,ij}}-1\Bigr)q_{n,ij}
		+ \sum_{i<j} \Bigl(\frac{1-q_{n,ij}}{1-p_{n,ij}}-1\Bigr)(1-q_{n,ij})\\
		&= \sum_{i<j} \frac{(q_{n,ij} -p_{n,ij})^2}{p_{n,ij}(1-p_{n,ij})}
		\leq (1+C) \sum_{i<j} \frac{(q_{n,ij} -p_{n,ij})^2}{p_{n,ij}}.
	\end{split}
	\]
	Under Janson's condition (\ref{eq:janson10}) the term on the
	right-hand side is $O(1)$. Consequently $-\Delta_n\to \infty$
	\emph{for any $a_n \to 0$}. Together with uniform tightness of
	the sequence of sums $\sum_{i<j}(k_{n,ij}Y_{ij}^n +l_{n,ij})$,
	this leads to remote contiguity $Q_n\contig a_n^{-1}P_n$
	for any $a_n\to 0$.
\end{proof}

The above contiguity proof, however, does not exploit the presence
of a sum of independent components to the full extent. To sharpen
the argument, one normalizes the sums appropriately and imposes
sufficient conditions for remote contiguity. In this case the
contributions to sums are independent, and the appropriate
normalization constants are
\[
s_n^2=\sum_{i<j}k^2_{n,ij}q_{n,ij}(1-q_{n,ij})
\]
for all $n\geq1$, as per Lindeberg's theorem. To illustrate how
conditions for remote contiguity weaken those for contiguity, we note
that Janson's condition (\ref{eq:janson10}) implies that $s_n^2$
remains bounded, whereas below in lemma~\ref{lem:IERRC} $s_n^2$
may diverge.
\begin{lemma}
	\label{lem:IERRC}
	Assume that $s_n<\infty$ for every $n\geq1$, and $s_n\to\infty$.
	Suppose that, for every $\ep>0$,
	\begin{equation}
		\label{eq:ERLindeberg}
		\frac{1}{s_n^2}\sum_{i<j}
		\EE_{Q_n}\bigl( k_{n,ij}^2(Y^n_{ij}-q_{n,ij})^2
		1_{\{k_{n,ij}|Y^n_{ij}-q_{n,ij}| > \ep s_n\}} \bigr) \to 0.
	\end{equation}
	Then $Q_n\contig a_n^{-1} P_n$ for any $(a_n)$, $a_n\downarrow0$,
	such that
	\begin{equation}
		\label{eq:RCIERG}
		\frac1{s_n}\Bigl(\sum_{i<j}
		\bigl(k_{n,ij}q_{n,ij}+l_{n,ij}\bigr)+\log(a_n)\Bigr)\to-\infty.
	\end{equation}
\end{lemma}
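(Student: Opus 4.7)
The plan is to reduce to Lemma \ref{lem:UT} and then invoke Lindeberg's central limit theorem. By Lemma \ref{lem:UT}, it suffices to verify, for every $\ep>0$, that some $M>0$ can be chosen so that for large enough $n$,
$$Q_n\bigl(T_n > \log M - \Delta_n\bigr) < \ep,$$
where $T_n := \sum_{i<j} k_{n,ij}(Y^n_{ij} - q_{n,ij})$.

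First I would normalize by $s_n$. Under $Q_n$, the summands forming $T_n$ are independent and centered, with variances summing to $s_n^2$ by the very definition of $s_n$. Setting $Z_n := T_n/s_n$, one has $\EE_{Q_n} Z_n = 0$ and $\EE_{Q_n} Z_n^2 = 1$. The hypothesis (\ref{eq:ERLindeberg}) is \emph{precisely} the classical Lindeberg condition for the triangular array $\{k_{n,ij}(Y^n_{ij}-q_{n,ij})\}_{i<j}$ with normalizer $s_n$. Combined with $s_n\to\infty$, Lindeberg's theorem then gives $Z_n \Rightarrow N(0,1)$ under $Q_n$, and in particular $(Z_n)$ is uniformly tight.

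Second, I would handle the threshold. Dividing the event of interest by $s_n$ rewrites it as $\{Z_n > (\log M - \Delta_n)/s_n\}$. By assumption (\ref{eq:RCIERG}), $\Delta_n/s_n \to -\infty$, so $(\log M - \Delta_n)/s_n \to +\infty$ for every fixed $M$. Uniform tightness of $(Z_n)$ then gives
$$Q_n\bigl(Z_n > (\log M - \Delta_n)/s_n\bigr) \to 0,$$
which is stronger than the required $\ep$-bound. Lemma \ref{lem:UT} then delivers $Q_n \contig a_n^{-1} P_n$.

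The only substantive point is bookkeeping: checking that (\ref{eq:ERLindeberg}) really is the Lindeberg condition at normalization $s_n$ (which it is, literally), and that (\ref{eq:RCIERG}) controls the threshold divided by $s_n$ in the right direction. No genuine obstacle appears; the lemma is essentially Lindeberg's CLT packaged through Lemma \ref{lem:UT}, with the role of $(a_n)$ absorbed into the drift term $\Delta_n$.
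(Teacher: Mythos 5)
Your argument coincides with the paper's proof: both reduce to verifying condition (\ref{eq:uniftightunnormalized}) of Lemma~\ref{lem:UT}, recognize (\ref{eq:ERLindeberg}) as the Lindeberg condition for the normalized triangular array $k_{n,ij}(Y^n_{ij}-q_{n,ij})/s_n$, invoke Theorem~\ref{thm:lindebergfeller} for weak convergence to $N(0,1)$ (hence tightness), and then use $-\Delta_n/s_n\to\infty$ from (\ref{eq:RCIERG}) to send the threshold to $+\infty$. The only cosmetic difference is that you spell out the tightness step that the paper leaves implicit; the route is the same.
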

\begin{proof}
	Apply the Lindeberg-Feller condition of theorem
	\ref{thm:lindebergfeller} to (\ref{eq:uniftightunnormalized}):
	$s_n$-normalized sums converge weakly to the standard normal
	distribution, if (\ref{eq:ERLindeberg}) holds. Note that, under
	condition (\ref{eq:RCIERG}), we have $-\Delta_n/s_n\to\infty$, and we
	conclude that, for every $\ep>0$ and any choice for $M>0$,
	condition (\ref{eq:uniftightunnormalized}) holds for large enough $n$.
\end{proof}

\subsection{Necessary conditions for remote contiguity of \ER\ graphs}
\label{sub:IERRCnecc}

Next we argue that, when \eqref{eq:ERLindeberg} holds true and $s_n \to \infty$,
then \eqref{eq:RCIERG} is also a necessary condition for remote contiguity
at rate $a_n$.
\begin{lemma}
	\label{lem:necessary}
	Assume that $s_n<\infty$ for every $n\geq1$, and $s_n\to\infty$ and
	suppose that \eqref{eq:ERLindeberg} is satisfied for every $\ep>0$.
	Then \eqref{eq:RCIERG} is necessary for $Q_n\contig a_n^{-1} P_n$
	to hold. 
\end{lemma}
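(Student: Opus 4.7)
My plan is to argue by contraposition, exploiting the characterization of remote contiguity as uniform tightness of the inverse likelihood ratios. From the proof of lemma~\ref{lem:UT} (and the supporting result lemma~\ref{lem:rcfirstlemma} in appendix~\ref{app:rc}) it follows that $Q_n\contig a_n^{-1}P_n$ is equivalent to the statement that $a_n(dP_n/dQ_n)^{-1}(Y^n)$ is uniformly $Q_n$-tight, which in view of (\ref{eq:likratioIERG}) is the statement that for every $\ep>0$ there exists $M>0$ such that, for all large enough $n$,
\[
Q_n\Bigl(\sum_{i<j}k_{n,ij}(Y^n_{ij}-q_{n,ij}) > \log(M)-\Delta_n\Bigr)<\ep,
\]
with $\Delta_n$ as in the statement of lemma~\ref{lem:UT}. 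I will show that failure of (\ref{eq:RCIERG}) precludes this.

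First, I set $S_n=\sum_{i<j}k_{n,ij}(Y^n_{ij}-q_{n,ij})$. Under $Q_n$, the summands are independent, centred, and have variances summing to $s_n^2$. Assuming the Lindeberg condition (\ref{eq:ERLindeberg}) and $s_n\to\infty$, theorem~\ref{thm:lindebergfeller} yields $S_n/s_n\Rightarrow N(0,1)$ under $Q_n$.

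Second, I assume that (\ref{eq:RCIERG}) fails. Negating the divergence to $-\infty$, there exist $L>0$ and a subsequence $(n_k)$ with $\Delta_{n_k}/s_{n_k}\geq -L$. For any fixed $M>0$, I then estimate
\[
Q_{n_k}\Bigl(S_{n_k}>\log(M)-\Delta_{n_k}\Bigr)
= Q_{n_k}\Bigl(\frac{S_{n_k}}{s_{n_k}}>\frac{\log(M)-\Delta_{n_k}}{s_{n_k}}\Bigr)
\geq Q_{n_k}\Bigl(\frac{S_{n_k}}{s_{n_k}}>\frac{\log(M)}{s_{n_k}}+L\Bigr),
\]
since replacing the threshold by a (weakly) larger number can only shrink the event. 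Because $s_{n_k}\to\infty$, we have $\log(M)/s_{n_k}\to 0$, and weak convergence of $S_{n_k}/s_{n_k}$ to a standard normal implies that the right-hand side tends to $1-\Phi(L)>0$.

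Third, I conclude: choose any $\ep\in(0,1-\Phi(L))$. No matter which $M>0$ one picks, the lower bound above forces the probability in (\ref{eq:uniftightunnormalized}) to exceed $\ep$ infinitely often along $(n_k)$; hence $a_n(dP_n/dQ_n)^{-1}(Y^n)$ fails to be uniformly $Q_n$-tight, and so $Q_n\not\contig a_n^{-1}P_n$. The main subtlety in this argument is invoking the \emph{necessity} (not merely sufficiency) of uniform tightness of the inverse likelihood ratios for remote contiguity; this is the content of the characterization result summarised in appendix~\ref{app:rc}, and is the only non-routine input beyond the standard Lindeberg--Feller CLT.
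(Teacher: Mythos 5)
Your plan is essentially sound, and the CLT/subsequence core of the argument matches the paper's. The one genuine gap is the step you yourself flag as ``the only non-routine input'': you invoke the \emph{necessity} of uniform $Q_n$-tightness of $a_n(dP_n/dQ_n)^{-1}$ for $Q_n\contig a_n^{-1}P_n$, but lemma~\ref{lem:rcfirstlemma} is stated in the paper purely as a list of \emph{sufficient} conditions (``$Q_n\contig a_n^{-1}P_n$ \emph{if} any of the following hold''), and lemma~\ref{lem:UT} likewise only runs one way. You cannot cite those results for the converse direction; as written, your proof has an unjustified premise.

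The paper avoids this by not invoking any characterization at all: it constructs explicit events
\[
C_n=\Bigl\{\,y^n:\ q_n(y^n)>\tfrac{1}{a_nb_n}p_n(y^n)\,\Bigr\},
\]
using an auxiliary sequence $b_n\downarrow0$ with $s_n^{-1}\log(b_n)\to0$, shows directly that $P_n(C_n)\le a_nb_nQ_n(C_n)=o(a_n)$, and then uses the same Lindeberg--Feller CLT you use to bound $Q_n(C_n)$ away from zero when (\ref{eq:RCIERG}) fails. This produces a concrete counterexample to (\ref{eq:defrc}) and needs nothing beyond the definition of remote contiguity. The auxiliary $b_n$ is what gives the strict $o(a_n)$ (rather than $O(a_n)$) on the $P_n$-side; your tightness route would need essentially the same device, just hidden inside the proof that condition (ii) of lemma~\ref{lem:rcfirstlemma} is necessary. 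To repair your argument, either prove that necessity as a separate preliminary lemma (it is true, via the standard Le~Cam-style diagonal argument with a shrinking $\delta_m$), or bypass it and build the $C_n$ directly as the paper does.
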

\begin{proof} For every $n\geq1$, let $\nu_n$ be a measure that dominates
	both $P_n$ and $Q_n$ (\eg\ $\nu_n= (P_n +Q_n)/2$) and define
	$p_n=dP_n/d\nu_n$ and $q_n=dQ_n/d\nu_n$. Let $(a_n),(b_n)$ such
	that $a_n,b_n>0$, $a_n,b_n\downarrow 0$ such that $s_n^{-1}\log (b_n) \to 0$.
	Define
	\begin{equation}
		C_n = \left\lbrace \, y^n \in \scrX_n:\,
		q_n(y^n) > \frac{1}{a_n b_n}p_n(y^n)
		\right\rbrace.
	\end{equation}
	It is immediate that
	\begin{equation}
		P_n(C_n) 
		\leq \int_{C_n} a_n b_n q_n(y^n) d \nu_n(y^n)
		\leq a_n b_n Q_n(C_n)=o(a_n).
	\end{equation}
	For any $\eta>0$ and $n$ large enough, we find
	\begin{equation}
		\begin{split}
			Q(C_n)&=Q_n\biggl( a_n b_n
			  \Bigl(\frac{dP_n}{dQ_n}(Y_n)\Bigr)^{-1}>1\biggr)\\[2mm]
			&= Q_n \left( \frac{\sum_{i<j} k_{n,ij}(Y_{ij}^n -q_{n,ij})}{s_n}
			  > \frac{-\log (b_n)}{s_n} -\frac{\Delta_n}{s_n} \right)\\[2mm]
			& \geq Q_n \left( \frac{\sum_{i<j} k_{n,ij}(Y_{ij}^n -q_{n,ij})}{s_n}
			  > \eta -\frac{\Delta_n}{s_n}\right),
		\end{split}
	\end{equation}
	since $-\log (b_n) /s_n \to 0$. If (\ref{eq:RCIERG}) does not hold,
	$\lim_{n\to \infty} \Delta_n/s_n>-\infty$, so that, for some $M>0$
	and all $n$ large enough,
	\begin{equation}
		Q_n(C_n) \geq Q_n \left(
		  \frac{\sum_{i<j} k_{n,ij}(Y_{ij}^n -q_{n,ij})}{s_n} > \eta + M
		\right).
	\end{equation}
	By theorem~\ref{thm:lindebergfeller},
	$\sum_{i<j} k_{n,ij}(Y_{ij}^n -q_{n,ij})/s_n$ converges weakly to a standard
	normal distribution. Then $\liminf_{n \to \infty} Q_n(C_n)>0$ and $(Q_n)$ is
	not $a_n$-remotely contiguous with respect to $(P_n)$. Conclude that
	(\ref{eq:RCIERG}) is necessary for remote contiguity at rate $(a_n)$.
\end{proof}

To estimate whether sequential data $(Y_n)$, $Y_n\in\scrX_n$ was generated
by $(P_n)$ or by $(Q_n)$, statisticians use (randomized tests based on)
test functions $\phi_n:\scrX_n\to[0,1]$. A test function is considered
(minimax-)optimal if it minimizes the sum of type-I and type-II errors:
\[
\pi_n(\phi_n) = \EE_{P_n}\phi_n(Y_n) + \EE_{Q_n}(1-\phi_n(Y_n)).
\]
As it turns out, there is a general upper bound for $\pi_n$ in terms
of the Hellinger affinity $\alpha(P_n,Q_n)$ between $P_n$ and $Q_n$:
\[
\inf_\phi \pi_n(\phi) \leq \int_{\scrX_n}
\sqrt{p_n(y_n)q_n(y_n)}\,d\nu_n(y_n)\quad(=:\alpha(P_n,Q_n)),
\]
and the likelihood-ratio test function minimizes $\pi_n$ (see
also \cite{LeCam86}, section~16.4). Clearly, if there exists a
sequence $(\phi_n)$ such that $\pi_n(\phi_n)=o(1)$, then $(P_n)$
and $(Q_n)$ are not contiguous.

To reason likewise regarding $a_n$-remote contiguity, consider an
$a_n$-weighted version of $\pi_n$,
\[
\pi'_n(\phi_n) = a_n^{-1}\EE_{P_n}\phi_n(Y_n) + \EE_{Q_n}(1-\phi_n(Y_n)).
\]
Reasoning the same as in the contiguous case, we find the following
correspondence between testability and remote contiguity.
\begin{lemma}
	\label{lem:testnorc}
	If there exists a sequence $(\phi_n)$ such that $\pi'_n(\phi_n)=o(1)$,
	then the sequences $(P_n)$ and $(Q_n)$ are not $a_n$-remotely
	contiguous. This is the case whenever $\alpha(P_n,Q_n)=o(a_n^{1/2})$.
\end{lemma}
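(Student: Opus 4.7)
The plan is to split the lemma into its two assertions and handle them in sequence. For the first implication (existence of tests rules out remote contiguity), I would convert the testing statement into an event statement by thresholding. Given $(\phi_n)$ with $\pi'_n(\phi_n)=o(1)$, define
\[
A_n=\bigl\{y_n\in\scrX_n:\phi_n(y_n)>\tfrac12\bigr\}.
\]
Two applications of Markov's inequality then yield $P_n(A_n)\leq 2\,\EE_{P_n}\phi_n$ and $1-Q_n(A_n)\leq 2\,\EE_{Q_n}(1-\phi_n)$. Since $a_n^{-1}\EE_{P_n}\phi_n=o(1)$ by hypothesis, the first bound gives $P_n(A_n)=o(a_n)$, while the second bound forces $Q_n(A_n)\to1$. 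This exhibits a sequence of events for which $P_n(A_n)=o(a_n)$ but $Q_n(A_n)\not\to0$, directly violating the defining implication (\ref{eq:defrc}) of $a_n$-remote contiguity.

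For the second assertion, I would construct the required tests explicitly as Neyman--Pearson-type likelihood-ratio tests with an $a_n$-tilted threshold. With $\nu_n$ dominating both measures and densities $p_n=dP_n/d\nu_n$, $q_n=dQ_n/d\nu_n$, set
\[
\phi_n(y_n)=1_{\{a_n q_n(y_n)>p_n(y_n)\}}.
\]
On $\{a_n q_n>p_n\}$ one has $p_n^{1/2}<a_n^{1/2}q_n^{1/2}$, hence $p_n\leq a_n^{1/2}\sqrt{p_n q_n}$, and so
\[
\EE_{P_n}\phi_n=\int_{\{a_nq_n>p_n\}}p_n\,d\nu_n
\leq a_n^{1/2}\int\sqrt{p_nq_n}\,d\nu_n = a_n^{1/2}\alpha(P_n,Q_n).
\]
Symmetrically, on the complement $q_n\leq a_n^{-1/2}\sqrt{p_nq_n}$, giving $\EE_{Q_n}(1-\phi_n)\leq a_n^{-1/2}\alpha(P_n,Q_n)$. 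Summing the two contributions,
\[
\pi'_n(\phi_n)=a_n^{-1}\EE_{P_n}\phi_n+\EE_{Q_n}(1-\phi_n)\leq 2\,a_n^{-1/2}\alpha(P_n,Q_n),
\]
which is $o(1)$ precisely under the assumption $\alpha(P_n,Q_n)=o(a_n^{1/2})$. The first assertion then rules out $a_n$-remote contiguity.

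The main technical point, and the step that requires the most care, is locating the correct threshold in the Neyman--Pearson test: the classical bound $\inf_\phi\pi_n(\phi)\leq\alpha(P_n,Q_n)$ uses the unweighted threshold $p_n=q_n$, but here the type-I error is inflated by $a_n^{-1}$ in $\pi'_n$, so the cut must be shifted to $a_n q_n=p_n$. The symmetric $\sqrt{\cdot}$ trick then distributes the factor $a_n^{-1/2}$ uniformly across both error terms, which is why the hypothesis $\alpha(P_n,Q_n)=o(a_n^{1/2})$ is exactly the natural strengthening of $\alpha(P_n,Q_n)=o(1)$ needed to conclude the existence of a testing sequence with $\pi'_n(\phi_n)=o(1)$.
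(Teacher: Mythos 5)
Your proof is correct, and it follows the same overall strategy as the paper (a likelihood-ratio test with an $a_n$-tilted threshold, bounded via the Hellinger affinity), but with one genuine improvement in the handling of the first implication. The paper argues that $\psi_n=1_{\{q_n>a_n p_n\}}$ minimizes $\pi'_n$, so that $\pi'_n(\phi_n)=o(1)$ implies $\pi'_n(\psi_n)=o(1)$, and then reads off $P_n(A_n)=o(a_n)$, $Q_n(A_n)\to 1$ from the two nonnegative summands. You instead take an arbitrary $(\phi_n)$ with $\pi'_n(\phi_n)=o(1)$, threshold at $\tfrac12$, and apply Markov's inequality twice; this delivers the same event sequence $(A_n)$ without any appeal to optimality of the test, which is both shorter and logically self-contained. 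Moreover, your choice of Neyman--Pearson threshold $\{a_n q_n > p_n\}$ (equivalently $\{q_n > a_n^{-1}p_n\}$) is the one that actually minimizes $\pi'_n$: minimizing $1+\int\phi(a_n^{-1}p_n-q_n)\,d\nu_n$ yields $\phi=1$ precisely on $\{q_n\geq a_n^{-1}p_n\}$. The paper writes $\{q_n>a_n p_n\}$ and drops the $a_n^{-1}$ in front of the type-I error integral in the displayed chain of inequalities; with these restored, the paper's computation reduces to exactly yours, giving $\pi'_n\leq 2a_n^{-1/2}\alpha(P_n,Q_n)$, and this is $o(1)$ when $\alpha(P_n,Q_n)=o(a_n^{1/2})$.
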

\begin{proof}
	For every $n\geq1$ the likelihood ratio test function, defined
	for all $y_n\in\scrX_n$ by
	\[
	\psi_n(y_n) = 1_{\{q_n>a_n\,p_n\}}(y_n),
	\]
	minimizes $\pi'_n$. Suppose that there exists a sequence $(\phi_n)$
	such that $\pi'_n(\phi_n)=o(1)$. Then $\pi'_n(\psi_n)=o(1)$, so
	there are events $A_n=\{y_n:q_n(y_n)>a_n\,p_n(y_n)\}$ such that
	$P_n(A_n)=o(a_n)$, but $Q_n(A_n)\to1$, showing that $(P_n)$ and
	$(Q_n)$ are not $a_n$-remotely contiguous. We note the following
	upper bound for $\pi'_n$,
	\[
	\begin{split}
		\pi'_n(\psi_n) &= \inf_{\phi}\pi'_n(\phi)
		= \int_{\{q_n>a_n\,p_n\}} p_n(y_n)\,d\nu_n(y_n)
		+ \int_{\{q_n\leq a_n\,p_n\}} q_n(y_n)\,d\nu_n(y_n)\\
		&\leq \int_{\{q_n>a_n\,p_n\}} \sqrt{a_n^{-1}\,p_n(y_n)q_n(y_n)}\,d\nu_n(y_n)
		+ \int_{\{q_n\leq a_n\,p_n\}} \sqrt{a_n\,p_n(y_n)q_n(y_n)} \,d\nu_n\\
		&\leq a_n^{-1/2}\,\alpha(P_n,Q_n),\phantom{\int}
	\end{split}
	\]
	where the last bound holds for large enough $n$.
\end{proof}
In the proof of lemma~\ref{lem:necessary} we change the argument of
lemma~\ref{lem:testnorc} slightly (through the inclusion of a separate
sequence $b_n\downarrow0$), but the essence is the same: the existence
of certain test sequences precludes remote contiguity.

In the case of $n$-vertex \ER\ graph distributions, the Hellinger
affinity is equal to the product of Hellinger affinities
for each of the independent, Bernoulli-distributed random variables
$Y_{n,ij}$:
\begin{equation}
	\label{eq:hellaffER}
	\alpha(P_n,Q_n)
	= \prod_{i<j} \Bigl( \sqrt{p_{n,ij}\phantom{(}q_{n,ij}}
	+\sqrt{(1-p_{n,ij})(1-q_{n,ij})} \Bigr).
\end{equation}

\subsection{Perturbations of \ER\ graphs}
\label{sub:ApplRC} 

Lemma~\ref{lem:IERRC} formulates a condition that delimits the range
of applicability for remote contiguity in terms of a Lindeberg-type
condition involving the sequence of Kullback-Leibler divergences. In
this section, we simplify that condition with sufficient
conditions formulated directly in terms of the defining parameters
of the \ER\ graphs.
\begin{lemma}
	\label{lem:ER}
	Choose $q_{n,ij}=\lambda_{n,ij}/n$, $p_{n,ij}=\mu_{n,ij}/n$ with
	$0\leq\lambda_{n,ij},\mu_{n,ij}\leq n$ and define
	$P_n=P_{(p_{n,ij}),n}$, $Q_n=P_{(q_{n,ij}),n}$, for all $n\geq1$
	and all $1\leq i,j\leq n$. Assume that
	\begin{equation}
		\label{eq:hp}
		r_n := \sup_{i<j} \frac{\left|\mu_{n,ij}-\lambda_{n,ij}\right|}{
			\mu_{n,ij}
		} \to 0, \quad 
		R_n := \sum_{i<j}\frac{(\lambda_{n,ij}-\mu_{n,ij})^2}{\mu_{n,ij}
			(n-\mu_{n,ij})} \to \infty.
	\end{equation}
	Then, $Q_n\contig a_n^{-1}P_n$ if and only if
	$a_n=o(\exp(-R_n))$. If instead $R_n=O(1)$, then
	$Q_n\contig P_n$.
\end{lemma}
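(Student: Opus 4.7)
The plan is to invoke Lemma~\ref{lem:IERRC} for sufficiency and Lemma~\ref{lem:necessary} for the converse, reducing everything to uniform Taylor-expansion estimates of the log-likelihood coefficients $k_{n,ij}$ and $l_{n,ij}$, the variance $s_n^2$, and the Kullback--Leibler divergence $\sum_{i<j}(k_{n,ij}q_{n,ij}+l_{n,ij})$. Writing $\delta_{n,ij}=(\lambda_{n,ij}-\mu_{n,ij})/\mu_{n,ij}$ and noting that both $|\delta_{n,ij}|\leq r_n$ and $|(1-q_{n,ij})/(1-p_{n,ij})-1|=|\lambda_{n,ij}-\mu_{n,ij}|/(n-\mu_{n,ij})$ are uniformly $o(1)$ (the latter by \eqref{eq:limitbound}), a second-order expansion of each logarithm yields the uniform bound $|k_{n,ij}|\leq Cr_n$ together with the leading-order identities $s_n^2=R_n(1+o(1))$ and $\sum_{i<j}(k_{n,ij}q_{n,ij}+l_{n,ij})=\tfrac12 R_n(1+o(1))$, the latter being the Bernoulli relative entropy $(q-p)^2/(2p(1-p))(1+o(1))$ summed over $(i,j)$.

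Next, because $|Y^n_{ij}-q_{n,ij}|\leq 1$, the indicator in \eqref{eq:ERLindeberg} vanishes whenever $|k_{n,ij}|\leq \varepsilon s_n$; the uniform estimate $|k_{n,ij}|\leq Cr_n$ combined with $s_n=\sqrt{R_n}(1+o(1))\to\infty$ then makes the Lindeberg condition automatic for large $n$. At that point Lemmas~\ref{lem:IERRC} and \ref{lem:necessary} collapse the question of $a_n$-remote contiguity into the single scalar criterion $(\tfrac12 R_n+\log a_n)/\sqrt{R_n}\to -\infty$. The hypothesis $a_n=o(\exp(-R_n))$ gives $\log a_n+R_n\to -\infty$, hence $\tfrac12 R_n+\log a_n\leq -\tfrac12 R_n-\omega_n$ for some $\omega_n\to\infty$; dividing by $\sqrt{R_n}\to\infty$ yields the sufficient direction. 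For necessity I would adapt the construction in the proof of Lemma~\ref{lem:necessary}, using the central limit theorem $\sum_{i<j}k_{n,ij}(Y^n_{ij}-q_{n,ij})/s_n\to N(0,1)$ (guaranteed by Lindeberg) to produce events with $P_n$-probability $o(a_n)$ but $Q_n$-probability bounded away from zero as soon as the scalar criterion fails.

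For the remaining case $R_n=O(1)$, the uniform bound \eqref{eq:limitbound} ensures that $n-\mu_{n,ij}$ is comparable to $n$ uniformly in $(i,j)$, so $(p_{n,ij}-q_{n,ij})^2/p_{n,ij}$ is bounded above by a constant multiple of $(\lambda_{n,ij}-\mu_{n,ij})^2/(\mu_{n,ij}(n-\mu_{n,ij}))$; summing over $(i,j)$, Janson's condition \eqref{eq:janson10} is met, and the proposition following Lemma~\ref{lem:UT} immediately delivers $Q_n\contig P_n$. The main obstacle is making the Taylor remainders uniform in $(i,j)$, which is exactly the purpose of the hypothesis $r_n\to 0$; a secondary subtlety is extracting the clean exponential characterisation $a_n=o(\exp(-R_n))$ from the sharper scalar condition of Lemma~\ref{lem:necessary}, which relies on $R_n\to\infty$ so that $\sqrt{R_n}=o(R_n)$ absorbs any sub-exponential slack.
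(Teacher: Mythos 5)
Your overall plan follows the paper's: invoke Lemma~\ref{lem:IERRC} for sufficiency, Lemma~\ref{lem:necessary} for necessity, and control $s_n^2$, the coefficients $k_{n,ij}$ and the Kullback--Leibler divergence by uniform Taylor expansion under $r_n\to0$. Two of your intermediate steps are genuine improvements on the paper's own proof. First, your Lindeberg check---since $|Y^n_{ij}-q_{n,ij}|\leq 1$, the indicator in \eqref{eq:ERLindeberg} is identically zero once $\sup_{i<j}|k_{n,ij}|=O(r_n)\to0$ drops below $\varepsilon s_n\to\infty$---is cleaner than the paper's Lyapunov-type $(2+\delta)$-moment bound. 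Second, your sharp identity $\sum_{i<j}(k_{n,ij}q_{n,ij}+l_{n,ij})=\tfrac{1}{2}R_n(1+o(1))$ is correct (it is the Bernoulli relative entropy to leading order) and is more informative than the one-sided estimate the paper extracts from $\log(1+x)\leq x$, namely $-\EE_{Q_n}\log(dP_n/dQ_n)\leq R_n$. In fact the paper, using only this bound, verifies \eqref{eq:RCIERG} only under $\limsup_n R_n^{-1}\log a_n<-1$, which is strictly stronger than $a_n=o(\exp(-R_n))$ (take $\log a_n=-R_n-\log R_n$); your argument with the $\tfrac{1}{2}R_n$ expansion and the $\sqrt{R_n}$ slack actually closes that gap in the ``if'' direction.

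The sharper identity also exposes a real problem in the ``only if'' direction, which you flag as ``a secondary subtlety'' but cannot resolve by the route you sketch---because the statement as written is not what Lemmas~\ref{lem:IERRC} and \ref{lem:necessary} deliver. With $K_n:=-\EE_{Q_n}\log(dP_n/dQ_n)=\tfrac{1}{2}R_n(1+o(1))$ and $s_n=\sqrt{R_n}(1+o(1))$, condition \eqref{eq:RCIERG} reads $(\tfrac{1}{2}R_n+\log a_n)/\sqrt{R_n}\to-\infty$, and this already holds for $a_n=\exp(-cR_n)$ with any $c>\tfrac{1}{2}$: taking $c=\tfrac{3}{4}$ gives $-\tfrac{1}{4}\sqrt{R_n}\to-\infty$, hence $Q_n\contig a_n^{-1}P_n$ by Lemma~\ref{lem:IERRC}, yet $a_n\exp(R_n)=\exp(R_n/4)\to\infty$, so $a_n$ is \emph{not} $o(\exp(-R_n))$. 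Thus $a_n=o(\exp(-R_n))$ is sufficient but not necessary; the threshold that Lemma~\ref{lem:necessary} actually enforces sits near $\exp(-R_n/2)$, not $\exp(-R_n)$. The discrepancy is precisely the factor $\tfrac{1}{2}$ between the KL divergence and $R_n$, which the paper's $\log(1+x)\leq x$ bound conceals; the paper's own proof simply asserts that Lemmas~\ref{lem:IERRC} and \ref{lem:necessary} ``yield the first result'' without checking that its sufficiency threshold and the necessity condition of \eqref{eq:RCIERG} both reduce to $a_n=o(\exp(-R_n))$---they do not. Your handling of the $R_n=O(1)$ case (reduce to Janson's condition \eqref{eq:janson10} via $(p_{n,ij}-q_{n,ij})^2/p_{n,ij}\leq(\lambda_{n,ij}-\mu_{n,ij})^2/(\mu_{n,ij}(n-\mu_{n,ij}))$) is correct and equivalent to the paper's direct uniform-tightness argument.
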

\begin{proof}
	Let $n\geq1$ be given. Using the inequality $ \log (1+x) \leq x$,
	valid for any $x>-1$, we estimate the KL divergence as follows
	\[
	\begin{split}
		-\EE_{Q_n}&\log\frac{dP_n}{dQ_n} (Y^n)
		= \sum_{i<j} \log\Bigl(\frac{q_{n,ij}}{p_{n,ij}} \Bigr) q_{n,ij}
		+ \sum_{i<j} \log\Bigl(\frac{1-q_{n,ij}}{1-p_{n,ij}} \Bigr)(1-q_{n,ij})\\
		&=\sum_{i<j} \biggl( \frac{\lambda_{n,ij}}{n}
		\log\Bigl(1+\frac{\lambda_{n,ij}-\mu_{n,ij}}{\mu_{n,ij}} \Bigr)\\
		&\qquad\qquad
		+ \sum_{i<j} \log\Bigl(1
		-\frac{(\lambda_{n,ij}-\mu_{n,ij})/n}{1-\mu_{n,ij}/n} \Bigr)
		\Bigl(1-\frac{\lambda_{n,ij}}{n}\Bigr) \biggr)\\
		&\leq 
		\sum_{i<j} \frac{\lambda_{n,ij}-\mu_{n,ij}}{n}
		\Bigl( \frac{\lambda_{n,ij}}{\mu_{n,ij}}
		- \frac{1-\lambda_{n,ij}/n}{1-\mu_{n,ij}/n} \Bigr)\\
		&= \sum_{i<j}
		\Bigl( 1+\frac{\mu_{n,ij}/n}{1-\mu_{n,ij}/n} \Bigr)
		\frac{(\lambda_{n,ij}-\mu_{n,ij})^2}{n\mu_{n,ij}}
		= \sum_{i<j}\frac{(\lambda_{n,ij}-\mu_{n,ij})^2}{\mu_{n,ij}(n-\mu_{n,ij})}.
	\end{split}
	\]
	Note that, by \eqref{eq:limitbound}, \eqref{eq:hp} and the expansion $\log (1+x)=x+O(x^2)$,
	($x\to0$), we have
	\[
	\begin{split}
		k^2_{n,ij} &= \biggl(\log\Bigl(\frac{\lambda_{n,ij}}{\mu_{n,ij}}
		\frac{1-\mu_{n,ij}/n}{1-\lambda_{n,ij}/n}\Bigr)\biggr)^2\\[2mm]
		&= \biggl(\log\Bigl(1+\frac{\lambda_{n,ij}-\mu_{n,ij}}{\mu_{n,ij}}\Bigr)
		- \log\Bigl(
		1-\frac{(\lambda_{n,ij}-\mu_{n,ij})/n}{1-\mu_{n,ij}/n}\Bigr)\biggr)^2\\[2mm]
		&= \Bigl(\frac{\lambda_{n,ij}-\mu_{n,ij}}{\mu_{n,ij}}\Bigr)^2
		\Bigl( \frac1{1-\mu_{n,ij}/n} + O(r_n)\Bigr)^2,
	\end{split}
	\]
	for all $1\leq i<j\leq n$, and
	\[
	\begin{split}
		q_{n,ij}(1-q_{n,ij})
		&= \frac{\lambda_{n,ij}}{n}
		\Bigl(1-\frac{\lambda_{n,ij}}{n}\Bigr)\\[2mm]
		&= \frac{\mu_{n,ij}}{n}
		\biggl(1+\frac{\lambda_{n,ij}-\mu_{n,ij}}{\mu_{n,ij}}\biggr)
		\biggl(1-\frac{\mu_{n,ij}}{n}
		\Bigl(1+\frac{\lambda_{n,ij}-\mu_{n,ij}}{\mu_{n,ij}}\Bigr)\biggr)\\[2mm]
		&= \frac{\mu_{n,ij}}{n}
		\Bigl( 1 
		-\frac{\mu_{n,ij}}{n}+O(r_n)
		\Bigr).
	\end{split}
	\]
	As a consequence of the above two displays, we have
	\[
	\begin{split}
		s_n^2 &= \sum_{i<j}k^2_{n,ij}q_{n,ij}(1-q_{n,ij})\\
		& = \sum_{i<j} \Bigl(\frac{\lambda_{n,ij}-\mu_{n,ij}}{\mu_{n,ij}}\Bigr)^2
		\biggl( \frac1{1-\mu_{n,ij}/n} + O(r_n) \biggr)^2
		\frac{\mu_{n,ij}}{n}
		\Bigl( 1 
		-\frac{\mu_{n,ij}}{n}+O(r_n)
		\Bigr)\\[2mm]
		&= \sum_{i<j} \frac{(\lambda_{n,ij}-\mu_{n,ij})^2}{\mu_{n,ij} (n -\mu_{n,ij}) } 
		\left( 1+\left(1-\frac{\mu_{n,ij}}{n}\right) O(r_n)\right)^2 
		\Bigl( {1 + O(r_n)} 
		\Bigr)\\[2mm]
		&=(1+o(1))\sum_{i<j} \frac{(\lambda_{n,ij}-\mu_{n,ij})^2}{\mu_{n,ij}
			(n -\mu_{n,ij}) }.
	\end{split}
	\]
	Condition~(\ref{eq:RCIERG}) is then satisfied for any $a_n$ such that 
	$\limsup_{n \to \infty } R_n^{-1}\log (a_n) < -1$.
	It remains to verify condition~(\ref{eq:ERLindeberg}). To that end,
	consider, for any $\ep,\delta>0$,
	\[
	\begin{split}
		\frac{1}{s_n^2}\sum_{i<j}
		\EE_{Q_n}&\bigl( k_{n,ij}^2(Y^n_{ij}-q_{n,ij})^2
		1_{\{k_{n,ij}|Y^n_{ij}-q_{n,ij}| > \ep s_n\}} \bigr)\\
		&\leq \frac{1}{\ep^\delta s_n^{2+\delta}}\sum_{i<j}
		\EE_{Q_n}|k_{n,ij}|^{2+\delta}|Y^n_{ij}-q_{n,ij}|^{2+\delta}\\
		& = \frac{1}{\ep^\delta s_n^{2+\delta}}\sum_{i<j}
		|k_{n,ij}|^{2+\delta} q_{n,ij}(1-q_{n,ij})
		\left( (1-q_{n,ij})^{1+\delta} + q_{n,ij}^{1+\delta}\right)\\
		& \leq \frac{1}{\ep^\delta s_n^{2+\delta}}\| k_n \|_{\infty,n}^\delta
		\sum_{i<j} k_{n,ij}^2 q_{n,ij}(1-q_{n,ij})\\
		&=  \frac1{\ep^\delta}\biggl(\frac{\| k_n \|_{\infty,n}}{s_n}\biggr)^\delta
		\leq O\biggl(\frac{r_n}{R_n^{1/2}}\biggr)^\delta \to 0,
	\end{split}
	\]
	implying that condition \eqref{eq:ERLindeberg} is satisfied. All the
	assumptions of lemma \ref{lem:IERRC} are thus fulfilled and an
	application of the latter, along with lemma \ref{lem:necessary},
	yields the first result. 
	
	If we replace the hypothesis $R_n \to \infty$ with $R_n=O(1)$, we have
	\[
	-\EE_{Q_n}\log\frac{dP_n}{dQ_n}(Y^n) = O(R_n) = O(1).
	\]
	The hypothesis $R_n=O(1)$ also implies that  $s_n^2 = (1+o(1))R_n =O(1)$
	and, in turn, uniform tightness of the sequence
	$\sum_{i<j}k_{n,ij}(Y_{ij}^n-q_{n,ij})$, so that $Q_n\contig P_n$.
\end{proof}

\begin{remark}
	The uniform convergence assumption $r_n \to 0$ in \eqref{eq:hp} is
	not strictly necessary to have $a_n^{-1}P_n \triangleright Q_n$
	for all rate sequences $a_n=o(\exp(-R_n))$.  At the cost of a
	more involved proof, the results of lemma \ref{lem:ER} can be
	extended to more general cases, such as that where $r_n$ is
	suitably bounded from above but not necessarily decaying to
	$0$. However, assuming $r_n \to 0$ does not appear overly
	restrictive, since it does not preclude, for example, increasing
	variance of log-likelihood-ratios $\log(dP_n/dQ_n)$ (unlike
	\eqref{eq:janson10}) and diverging $L_p$ distances
	$\| \mu_n -\lambda_n\|_{n,p}$, with $1 \leq p<\infty$.
\end{remark}

The following results give examples of applications of the
previous lemma to inhomogeneous and homogeneous perturbations
of a homogeneous \ER\ graph.

\begin{corollary}{\it (Homogeneous perturbation of the \ER\ graph)}\\
	\label{cor:homper}
	Choose $q_{n,ij}=\lambda_{n}/n$, $p_{n,ij}=\lambda/n$ with
	$0\leq\lambda_{n}\leq n$, $1\leq i,j\leq n$, and define
	$P_n=P_{(p_{n,ij}),n}$, $Q_n=P_{(q_{n,ij}),n}$, for all $n\geq1$.
	Assume that $\lambda_n\to\lambda$. If
	$n(\lambda -\lambda_n)^2 \to \infty$, then: $Q_n \contig a_n^{-1} P_n$,
	if and only if,
	\[
	a_n=o\left(\exp\left(-\frac{n}{2\lambda}(\lambda_n -\lambda)^2\right)\right). 
	\]
	If instead $n(\lambda_n-\lambda)^2=O(1)$, then $Q_n \contig P_n$.
\end{corollary}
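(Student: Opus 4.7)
The plan is to derive the corollary as a direct specialization of Lemma~\ref{lem:ER} to constant edge-parameters $\mu_{n,ij}=\lambda$ and $\lambda_{n,ij}=\lambda_n$ for all $1\leq i<j\leq n$. First, the uniform relative deviation in \eqref{eq:hp} reduces to $r_n=|\lambda_n-\lambda|/\lambda$, which tends to zero by the assumption $\lambda_n\to\lambda$. The aggregate quantity becomes
\[
R_n = \binom{n}{2}\frac{(\lambda_n-\lambda)^2}{\lambda(n-\lambda)} = \frac{n(n-1)}{2\lambda(n-\lambda)}(\lambda_n-\lambda)^2,
\]
so the bookkeeping required to invoke Lemma~\ref{lem:ER} is entirely explicit.

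The crucial step is to reconcile $R_n$ with the explicit exponent $T_n:=n(\lambda_n-\lambda)^2/(2\lambda)$ appearing in the corollary. A direct computation gives
\[
R_n - T_n = T_n\cdot\frac{\lambda-1}{n-\lambda},
\]
and since $\lambda_n\to\lambda$ forces $T_n/n=(\lambda_n-\lambda)^2/(2\lambda)\to 0$, this yields $R_n = T_n + o(1)$. In particular, $R_n\to\infty$ if and only if $n(\lambda_n-\lambda)^2\to\infty$, and $R_n=O(1)$ if and only if $n(\lambda_n-\lambda)^2=O(1)$; moreover the two exponential rates $\exp(-R_n)$ and $\exp(-T_n)$ differ only by a factor tending to $1$.

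Putting these pieces together, in the regime $n(\lambda-\lambda_n)^2\to\infty$ Lemma~\ref{lem:ER} asserts that $Q_n\contig a_n^{-1}P_n$ if and only if $a_n=o(\exp(-R_n))$, which by the above asymptotic identification is equivalent to $a_n=o(\exp(-T_n))$, i.e.\ the rate bound stated in the corollary. In the complementary regime $n(\lambda_n-\lambda)^2=O(1)$, the second clause of Lemma~\ref{lem:ER} directly delivers $Q_n\contig P_n$. The only non-mechanical point of the argument is the passage from the exponent $R_n$ supplied by Lemma~\ref{lem:ER} to the cleaner expression $T_n$; everything else reduces to substituting constants into estimates already established in the proof of that lemma.
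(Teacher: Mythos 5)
Your proof is correct and follows the same route as the paper: specialize Lemma~\ref{lem:ER} to constant parameters, read off $r_n=|\lambda_n-\lambda|/\lambda$ and $R_n=\binom{n}{2}(\lambda_n-\lambda)^2/(\lambda(n-\lambda))$, and observe that the multiplicative $(1+O(n^{-1}))$ correction (equivalently, your additive $o(1)$ correction) between $R_n$ and the exponent $n(\lambda_n-\lambda)^2/(2\lambda)$ is immaterial for the rate condition. Your explicit identity $R_n-T_n=T_n(\lambda-1)/(n-\lambda)$ is a slightly more detailed way of recording what the paper leaves implicit in the $(1+O(n^{-1}))$ factor, but the substance is identical.
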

\begin{proof}
	We have $r_n=|\lambda_n-\lambda|/\lambda$ and, as $n \to \infty$, 
	\[
	R_n = \binom{n}{2} \frac{(\lambda_n - \lambda)^2}{\lambda(n-\lambda)}
	= \frac12 (1+O(n^{-1})) \frac{n (\lambda_n -\lambda)^2}{\lambda}.
	\]
	Then the result follows immediately from lemma \ref{lem:ER}.
\end{proof}
\begin{corollary}{\it (Inhomogeneous perturbation of the \ER\ graph)}\\
	\label{cor:inh}
	Choose $q_{n,ij}=\lambda_{n,ij}/n$, $p_{n,ij}=\lambda/n$ with
	$0\leq\lambda_{n}\leq n$,  $1\leq i,j\leq n$, and define
	$P_n=P_{(p_{n,ij}),n}$, $Q_n=P_{(q_{n,ij}),n}$, for all $n\geq1$. Assume that
	\[
	\bigl\|\lambda_n-\lambda\bigr\|_{\infty,n}=o(1).
	\]
	If also $n^{-1}\| \lambda_n - \lambda \|_{2,n}^2 \to \infty$, 
	then $Q_n\contig a_n^{-1}P_n$, if and only if
	\[
	a_n= o\left(
	\exp\left(- \frac{\| \lambda_n - \lambda \|_{2,n}^2}{n \lambda}\right)
	\right).
	\]
	If instead $n^{-1}\| \lambda_n - \lambda \|_{2,n}^2 =O(1)$,
	then $Q_n\contig P_n$.
\end{corollary}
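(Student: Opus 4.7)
The plan is to invoke lemma \ref{lem:ER} directly, with the constant choice $\mu_{n,ij}=\lambda$ for every $(i,j)$ and $\lambda_{n,ij}$ as given. First I would verify the uniform smallness hypothesis $r_n\to0$: with $\mu_{n,ij}=\lambda$ this becomes $r_n=\lambda^{-1}\|\lambda_n-\lambda\|_{\infty,n}$, which vanishes by the assumption of the corollary. Next I would compute
\begin{equation*}
R_n=\sum_{i<j}\frac{(\lambda_{n,ij}-\lambda)^2}{\lambda(n-\lambda)}=\frac{\|\lambda_n-\lambda\|_{2,n}^2}{\lambda(n-\lambda)},
\end{equation*}
using the definition of $\|\cdot\|_{2,n}$ from section \ref{sec:connER}.

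Since $n/(n-\lambda)\to1$, the two regimes of the corollary correspond exactly to the dichotomy $R_n\to\infty$ versus $R_n=O(1)$ in lemma \ref{lem:ER}: the bounded case delivers $Q_n\contig P_n$ directly, and the diverging case delivers the equivalence $Q_n\contig a_n^{-1}P_n \iff a_n=o(\exp(-R_n))$. All that remains is to translate $\exp(-R_n)$ into the form $\exp(-\|\lambda_n-\lambda\|_{2,n}^2/(n\lambda))$ stated in the corollary.

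The main (and essentially only) technical point is to justify this rate translation, \ie\ to show that
\begin{equation*}
R_n-\frac{\|\lambda_n-\lambda\|_{2,n}^2}{n\lambda}=\frac{\|\lambda_n-\lambda\|_{2,n}^2}{n(n-\lambda)}\to0.
\end{equation*}
Here the uniform convergence $r_n\to0$ plays a role beyond its function as a hypothesis of lemma \ref{lem:ER}: the pointwise bound $|\lambda_{n,ij}-\lambda|\leq r_n\lambda$ gives $\|\lambda_n-\lambda\|_{2,n}^2\leq r_n^2\lambda^2\binom{n}{2}$, so the displayed remainder is $O(r_n^2)=o(1)$. Consequently $\exp(-R_n)/\exp(-\|\lambda_n-\lambda\|_{2,n}^2/(n\lambda))\to1$, the two $o(\cdot)$-classes of rates coincide, and the stated characterization follows from lemma \ref{lem:ER}. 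Without the uniform hypothesis $\|\lambda_n-\lambda\|_{\infty,n}\to0$, one can still invoke lemma \ref{lem:ER} to get $Q_n\contig a_n^{-1}P_n$ at rates $a_n=o(\exp(-R_n))$, but the clean form of the exponent in the statement would no longer be guaranteed, which is why I expect this step to be the only one requiring genuine attention.
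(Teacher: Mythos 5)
Your proof takes essentially the same route as the paper: set $\mu_{n,ij}=\lambda$, read off $r_n=\lambda^{-1}\|\lambda_n-\lambda\|_{\infty,n}$ and $R_n=\|\lambda_n-\lambda\|_{2,n}^2/(\lambda(n-\lambda))$, and reduce to lemma~\ref{lem:ER}. The one place where you do more than the paper is worth noting: the paper records only the multiplicative relation $R_n=(1+O(n^{-1}))\|\lambda_n-\lambda\|_{2,n}^2/(n\lambda)$ and declares the result ``immediate,'' but a multiplicative $(1+O(n^{-1}))$ factor on a quantity that may diverge does not, on its own, make $\exp(-R_n)$ and $\exp(-\|\lambda_n-\lambda\|_{2,n}^2/(n\lambda))$ define the same $o(\cdot)$-class — one must check that the \emph{additive} gap tends to zero. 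You do exactly that, and correctly identify that the uniform hypothesis $r_n\to0$ (giving $\|\lambda_n-\lambda\|_{2,n}^2=O(r_n^2 n^2)=o(n^2)$) is precisely what closes this gap, since then $R_n-\|\lambda_n-\lambda\|_{2,n}^2/(n\lambda)=\|\lambda_n-\lambda\|_{2,n}^2/(n(n-\lambda))=O(r_n^2)\to0$. So your write-up is a slightly more careful version of the paper's argument, not a different one; no gaps.
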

\begin{proof}
	The result follows immediately from lemma \ref{lem:ER}, when we note that
	$r_n=\| \lambda_n - \lambda \|_{n,\infty}/\lambda$ and that
	\[
	R_n = \frac{ \| \lambda_n - \lambda \|_{2,n}^2} {\lambda(n-\lambda)}
	= (1+O(n^{-1}))\frac{\| \lambda_n - \lambda \|_{2,n}^2}{n \lambda}
	\]
	as $n \to \infty$.
\end{proof}

\section{Connectivity properties of \ER\ graphs}
\label{sec:ERconnectivity}

In this section, we collect several well-known properties of \ER\
graphs and examine their so-called \emph{remotely contiguous
	domains of attraction}, the families of perturbed homogeneous and
inhomogeneous \ER\ graphs that maintain connectivity
properties like the occurrence of a giant component,
$n^{2/3}$-scaling of the giant component at criticality,
$O(\log(n))$-fragmentation or full asymptotic connectedness
through remote contiguity.

\subsection{The giant component in the supercritical \ER\ graph}

As was first shown in \citep{Erdos59}, every supercritical \ER\
graph contains a \emph{giant component}, \ie\ a sequence of connected
components in $X^n$ containing a non-vanishing fraction of all
vertices, with probability growing to one. More precisely, we have
the following theorem.
\begin{theorem}
	\label{thm:ERsupercrit}
	For every $\lambda>1, \nu\in(1/2,1)$ there is a
	$\delta(\lambda,\nu)>0$ such that,
	\begin{equation}
		\label{eq:ERsupercritgiantcomponent}
		P_{\lambda,n}\Bigl(
		\bigl||\scrC_{\text{max}}|-\zeta_\lambda n \bigr| > n^{\nu} \Bigr)
		= O\bigl(n^{-\delta(\lambda,\nu)}\bigr),
	\end{equation}
	where $\zeta_\lambda$ is the survival probability of a Poisson branching
	process with mean offspring $\lambda$.
\end{theorem}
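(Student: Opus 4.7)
The plan is to follow the classical branching-process coupling approach for Erd\H os-R\'enyi giant components (see \citep{hofstad16}, chap.~4), sharpened to extract a polynomial concentration rate. Throughout, $X^n\sim P_{\lambda,n}$ with $\lambda>1$, and $\scrC(v)$ denotes the connected component of vertex $v$ in $X^n$.

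First, I would fix a cutoff $k_n=K\log n$ with $K$ large, and set $Z_v^n=1_{\{|\scrC(v)|\geq k_n\}}$ and $N_n=\sum_{v=1}^n Z_v^n$, so $N_n$ counts vertices belonging to a ``large'' component. The breadth-first exploration of $\scrC(v)$ can be sandwiched between Galton--Watson processes with $\mathrm{Bin}(n-1,\lambda/n)$ and $\mathrm{Bin}(n-k_n,\lambda/n)$ offspring; both are supercritical and their survival probabilities differ from the limiting $\mathrm{Poi}(\lambda)$-value $\zeta_\lambda$ by $O(k_n/n)$. Using the exponential tail of the total progeny $T$ conditional on extinction, $\PP(k_n\leq T<\infty)=O(n^{-c(\lambda)K})$, this yields $\EE_{P_{\lambda,n}}[Z_v^n]=\zeta_\lambda+O(n^{-\delta_1})$, with $\delta_1$ as large as desired by enlarging $K$.

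Second, I would bound the variance $\mathrm{Var}(N_n)=\sum_{v,w}\mathrm{Cov}(Z_v^n,Z_w^n)$ by splitting each covariance according to whether $v,w$ lie in the same component, and using a BK-type disjoint-occurrence estimate for the event of two separate large components. This gives $\mathrm{Var}(N_n)=O(n\log^\gamma n)$ for some $\gamma=\gamma(\lambda)$, and Chebyshev produces
\[
P_{\lambda,n}\bigl(|N_n-\EE_{P_{\lambda,n}} N_n|>\tfrac12n^\nu\bigr)=O(n^{1-2\nu}\log^\gamma n),
\]
polynomially small precisely when $\nu>1/2$; this is what forces the hypothesis $\nu\in(1/2,1)$. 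Third, to replace $N_n$ by $|\scrC_{\text{max}}|$, I would invoke a sprinkling argument: decompose the edge probability as $\lambda/n=p_1+p_2-p_1p_2$ with $p_2=\epsilon/n$ for small $\epsilon>0$, rule out components of size in the ``medium'' range $[k_n,\alpha n]$ of the $P_{p_1,n}$-graph by a further branching-process large-deviation estimate, and observe that any two surviving ``large'' components (each of size $\geq\alpha n$) fail to be joined by a sprinkled $p_2$-edge only with probability $(1-p_2)^{\alpha^2 n^2}=e^{-\Omega(n)}$. This yields $|\scrC_{\text{max}}|=N_n$ off an event of probability $O(n^{-\delta_2})$.

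Combining the three steps produces (\ref{eq:ERsupercritgiantcomponent}) with $\delta(\lambda,\nu)=\min(\delta_1,\delta_2,2\nu-1)$ up to logarithmic corrections, absorbed by taking $K$ sufficiently large. The main obstacle is keeping all three polynomial rates uniformly positive on compact subsets of $(1,\infty)\times(1/2,1)$, and making the covariance estimate quantitative enough that $\mathrm{Var}(N_n)$ is only $n\log^\gamma n$ rather than $n^2$; it is this near-linear variance bound that makes the Chebyshev step effective at rate $n^\nu$ for $\nu$ close to $1/2$, and it is where the bulk of the technical work resides.
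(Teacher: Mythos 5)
The paper does not prove this theorem: it states it as a classical result and points to \citep{hofstad16}, theorem~4.8, for the proof and for the explicit dependence of $\delta(\lambda,\nu)$ on $\lambda$ and $\nu$. Your three-step sketch --- branching-process coupling for the first moment of $N_n$, a near-linear variance bound yielding a Chebyshev estimate of order $n^{1-2\nu}$ up to logs (whence the hypothesis $\nu>1/2$), and sprinkling to identify $N_n$ with $|\scrC_{\text{max}}|$ --- correctly reconstructs the standard argument in that reference, so you are following essentially the same route the paper delegates to.
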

For a proof of this classical result (and the specific way in which
$\delta(\lambda,\nu)$ depends on $\lambda$ and $\nu$) see, for example,
(\cite{hofstad16}, theorem~4.8).

To generalize the occurrence of a giant component,
we pose conditions for sequences of laws $(Q_n)$ of \ER\ graphs
such that, for some $0<\delta<\delta(\lambda,\nu)$,
\begin{equation}
	\label{eq:IERRC}
	Q_n \contig n^{\delta} P_{\lambda,n}.
\end{equation}
Let $\scrQ(\lambda,\delta)$ denote the collection of all sequences
$(Q_n)$ in $\prod_n M^1(\scrX_n)$ satisfying (\ref{eq:IERRC}). In
all those cases, a giant component containing asymptotic fraction
$\zeta_\lambda$ of all vertices occurs (to within order $n^\nu$
vertices). For given $\lambda>1$ and $\nu\in(1/2,1)$, the
sequences $(Q_n)$ for which (\ref{eq:IERRC}) holds for some
$0<\delta<\delta(\lambda,\nu)$ is the union
\[
\scrQ(\lambda,\nu)=\bigcup\bigl\{
\scrQ(\lambda,\delta):0<\delta<\delta(\lambda,\nu)\bigr\}
\]
and, for given $\lambda>1$, the union over all $\nu$ contains
the cases in which a giant component containing a fraction
$\zeta_\lambda$ occurs (to within some negligible
$n^\nu$-fraction of the vertices). We call $\scrQ(\lambda)=
\cup\{\scrQ(\lambda,\nu):\nu\in(1/2,1)\}$ the \emph{remotely
	contiguous domain of attraction for the occurrence of a giant
	component containing $\zeta_\lambda n$ vertices}. Ultimately,
the union $\scrQ=\cup\{\scrQ(\lambda):\lambda>1\}$ forms a
class in which a giant component (containing {\it some}
asymptotically non-vanishing fraction of all vertices) will
form with probability {growing to} one. We refer to that class as the
\emph{remotely contiguous domain of attraction for
	the occurrence of a giant component}.

\begin{example}\label{ex:HomPerSuper}
	{\it (Homogeneous Perturbation of supercritical \ER\ graphs)}\\
	For some $\lambda>1$ choose $P_n=P_{\lambda,n}$ and $Q_n=P_{\lambda_n,n}$,
	with $\lambda_n\to\lambda$. For any choice $1/2<\nu<1$, let
	$0<\delta<\delta(\lambda,\nu)$ as in theorem~\ref{thm:ERsupercrit}
	be given. 
	
	As we have seen in corollary~\ref{cor:homper},
	$R_n=\ft{n}{2}\lambda^{-1}(1+O(n^{-1}))(\lambda_n -\lambda)^2$, so to
	render the rate $a_n$ in definition~(\ref{eq:defrc}) high enough to
	cover the probabilities for occurrence of a giant component,
	\cf\ theorem~\ref{thm:ERsupercrit}, we choose
	\begin{equation}
		\label{eq:ERsupercritperturbed}
		\lambda_n = \biggr(
		1 + \sqrt{\frac{2\delta}{\lambda}\frac{\log(n)}{n}} \biggl)\lambda,
	\end{equation}
	so that $R_n = \bigl( 1+ O(n^{-1})\bigr)\delta\log(n)$.
	Then
	\[
	n (\lambda_n - \lambda)^2 \to \infty,
	\quad\text{and}\quad
	n^{-\delta(\lambda,\nu)}=o(\exp(-R_n)).
	\]
	Therefore, by corollary~\ref{cor:homper}, we can conclude that, for all
	homogeneous $\lambda_n$-perturbations of the \ER\ graph sequence of
	the form (\ref{eq:ERsupercritperturbed}), a giant component containing
	an asymptotic fraction $\zeta_\lambda$ of the vertices occurs
	to within order-$n^\nu$ vertices,
	\[
	P_{\lambda_n,n}\Bigl(
	\bigl||\scrC_{\text{max}}|-\zeta_\lambda n \bigr| > n^{\nu} \Bigr)
	= o(1),
	\]
	since (\ref{eq:ERsupercritgiantcomponent}) ensures the occurrence
	of such a giant component at $\lambda>1$. We may therefore characterize
	the class of homogeneous \ER\ graphs in the remotely contiguous domain of
	attraction for the occurrence of a giant component containing
	$\zeta_\lambda n$ vertices, as
	\[
	\scrH \cap \scrQ(\lambda)
	= \bigcup_{\nu\in(1/2,1)} \biggl\{
	(P_{\lambda_n,n})\in \scrH\,:\,
	(\lambda_n-\lambda)^2
	< 2\lambda\,\delta(\lambda,\nu){\frac{\log(n)}{n}, (n\geq1)}
	\biggr\},
	\]
	and the class of homogeneous \ER\ graphs in the remotely contiguous
	domain of attraction for the occurrence of a giant component as
	\[
	\scrH \cap \scrQ
	= \bigcup \bigl\{\scrH \cap \scrQ(\lambda):\lambda>1
	\bigr \}.
	\]
\end{example}
\begin{example}{\it (Inhomogeneous Perturbation of supercritical
		\ER\ graphs)}\\
	Denote by $\scrI_{\infty}$ the class of inhomogeneous \ER\
	graphs obtained via \emph{uniform} perturbations of a homogeneous
	\ER\ graph $P_{\lambda,n}$, with $\lambda>0$, \ie\ the class of
	\ER\ graphs with edge probabilities $q_{n,ij}=\lambda_{n,ij}/n$
	satisfying $\sup_{i<j}|\lambda_{n,ij}-\lambda|\to0$ for some
	$\lambda>0$. In an analogous fashion, resorting to corollary
	\ref{cor:inh}, we can characterize the class of uniformly perturbed
	inhomogeneous \ER\ graphs in $\scrQ(\lambda)$, with
	$\lambda>1$, as follows,
	\[
	\scrI_\infty \cap \scrQ(\lambda)
	= \bigcup_{\nu\in(1/2,1)}\biggl\{
	(P_{(\lambda_{n,ij}),n})\in\scrI_\infty\,:\,
	\sum_{i<j}(\lambda_{n,ij}- \lambda)^2
	< \lambda\,\delta(\lambda,\nu)\,n\log(n),(n\geq1)\biggr\},
	\]
	and the class of uniformly perturbed inhomogeneous \ER\ graphs
	in the remotely contiguous domain of attraction for the
	occurrence of a giant component as
	\[
	\scrI_\infty \cap \scrQ
	= \bigcup \bigl\{\scrI_\infty \cap \scrQ(\lambda):\lambda>1
	\bigr \}.
	\]
\end{example}

\subsection{Fragmentation in subcritical \ER\ graphs}

Define $I_\lambda=\lambda-1-\log(\lambda)$ for $0<\lambda<1$, that is,
for the subcritical regime of the \ER\ graph. 
\begin{theorem}
	\label{thm:ERsubcrit}
	For given $0<\lambda<1$ and every $a>I_\lambda^{-1}$, there exists a
	$\delta=\delta(a,\lambda)>0$ such that
	\[
	P_{\lambda,n}\Bigl(
	|\scrC_{\text{max}}| \geq a\log(n)\Bigr)
	=O(n^{-\delta}).
	\]
	{Moreover, for any $a < I_\lambda^{-1}$, there exists} an $\eta=\eta(a,\lambda)>0$ such that
	\[
	P_{\lambda,n}\Bigl(
	|\scrC_{\text{max}}| \leq a\log(n)\Bigr)
	=O(n^{-\eta}).
	\]
\end{theorem}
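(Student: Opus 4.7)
The plan is to derive both tail estimates from the classical coupling of the cluster-exploration process with a Galton-Watson branching process with Poisson offspring distribution. Write $\scrC(v)$ for the component of a vertex $v$ in $X^n\sim P_{\lambda,n}$ and explore it in breadth-first order; at step $s$, a newly queued vertex contributes $\mathrm{Bin}(n-s,\lambda/n)$ neighbours among the yet-unexplored vertices. For $s\leq k=O(\log n)$ this offspring law is sandwiched between $\mathrm{Bin}(n-k,\lambda/n)$ and $\mathrm{Bin}(n-1,\lambda/n)$, both within total-variation distance $O(k^2/n)=o(1)$ of $\mathrm{Poisson}(\lambda)$. Hence $|\scrC(v)|$ is stochastically sandwiched between two near-Poisson branching-process progenies; using the Borel formula
\[
P(T=k)=\frac{(\lambda k)^{k-1}e^{-\lambda k}}{k!}
\sim\frac{1}{\lambda\sqrt{2\pi}}\,k^{-3/2}\,e^{-I_\lambda k}
\]
together with Stirling, one obtains the two-sided tail estimate $c\,k^{-3/2}e^{-I_\lambda k}\leq P(T\geq k)\leq C\,e^{-I_\lambda k}$ with constants depending only on $\lambda$.

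For the first (upper) assertion, with $a>I_\lambda^{-1}$, a union bound over the $n$ vertices combined with the upper tail above yields
\[
P_{\lambda,n}\bigl(|\scrC_{\text{max}}|\geq a\log n\bigr)
\leq n\cdot P\bigl(|\scrC(v)|\geq a\log n\bigr)
\leq C n\cdot e^{-I_\lambda a\log n}
= C\,n^{1-I_\lambda a},
\]
so any $0<\delta<I_\lambda a-1$ works. The binomial-to-Poisson correction contributes an extra factor $1+O(k^2/n)$ that is absorbed in $C$.

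For the second (lower) assertion, with $a<I_\lambda^{-1}$, I would use a second-moment argument. Set $k=\lfloor a\log n\rfloor$ and $N=\#\{v\in[n]:|\scrC(v)|\geq k\}$, so that $\{|\scrC_{\text{max}}|<k\}=\{N=0\}$. The matching lower bound from the first paragraph gives $E[N]\geq c\,n\,k^{-3/2}e^{-I_\lambda k}\gtrsim n^{1-I_\lambda a}(\log n)^{-3/2}\to\infty$. To bound the second moment, decompose
\[
E[N^2]=\sum_{u,v}P\bigl(|\scrC(u)|\geq k,\,|\scrC(v)|\geq k\bigr)
\]
according to whether $u$ and $v$ lie in the same component. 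Pairs with $u\not\leftrightarrow v$ give, by near-independence of cluster-explorations on disjoint vertex sets, a contribution $E[N]^2(1+o(1))$; pairs with $u\leftrightarrow v$ contribute $\sum_v E[|\scrC(v)|\,\mathbf{1}_{\{|\scrC(v)|\geq k\}}]\leq C n\sum_{j\geq k}j\cdot j^{-3/2}e^{-I_\lambda j}=O(n\,k^{-1/2}e^{-I_\lambda k})$, which is $o(E[N])$ and hence $o(E[N]^2)$ since $E[N]\to\infty$. Chebyshev's inequality then yields $P(N=0)\leq\mathrm{Var}(N)/E[N]^2=O(1/E[N])=O(n^{-\eta})$ for any $0<\eta<1-I_\lambda a$.

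The main obstacle is the sharp \emph{lower} bound on the branching-process tail $P(T\geq k)\geq c\,k^{-3/2}e^{-I_\lambda k}$, and, once that is in hand, careful bookkeeping of the binomial-to-Poisson coupling along the full exploration so that the $k^2/n$ error does not degrade the polynomial rate $n^{-\eta}$. The near-independence of disjoint-cluster explorations in the second-moment calculation is standard but requires an explicit coupling to a two-type branching process to be made fully rigorous.
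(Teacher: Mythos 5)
The paper gives no proof of its own here --- it refers directly to van der Hofstad (2016), Theorems 4.4--4.5. Your sketch reconstructs precisely that textbook argument (branching-process coupling of the cluster exploration, Borel/Stirling tail estimate, first-moment union bound for the upper tail, second-moment Chebyshev bound for the lower tail), so you are following the same route as the paper's source.

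Two bookkeeping points in your write-up deserve attention. For the upper tail you invoke a total-variation comparison of order $O(k^2/n)$ with the Poisson branching process and then treat the correction as a multiplicative factor $1+O(k^2/n)$; but TV only gives an \emph{additive} error, $P(|\scrC(v)|\geq k)\leq P(T\geq k)+O(k^2/n)$, and after the union bound the additive piece alone contributes $n\cdot O(k^2/n)=O((\log n)^2)\to\infty$, which wrecks the estimate. The fix --- and what the cited reference actually does --- is to use monotone coupling rather than TV: the exploration is stochastically dominated by the total progeny of a $\mathrm{Bin}(n-1,\lambda/n)$ branching process, whose tail you bound directly (Dwass's hitting-time formula plus a Chernoff/Cram\'er estimate); since the offspring mean $\lambda(1-1/n)$ shifts the exponent by $O(1/n)$, for $k=O(\log n)$ one really does get $P(T_{\mathrm{Bin}}\geq k)\leq e^{-(I_\lambda+o(1))k}$, a genuinely multiplicative correction. (For the lower tail your TV-style comparison is fine, because there $P(T\geq k)\asymp k^{-3/2}n^{-I_\lambda a}\gg k^2/n$ when $I_\lambda a<1$, so the additive error is negligible.) Separately, your same-component contribution to $E[N^2]$ is $O(nk^{-1/2}e^{-I_\lambda k})=O(k\,E[N])$, not $o(E[N])$; it is still $o(E[N]^2)$ because $E[N]/k\to\infty$, and the final Chebyshev bound then reads $O(k/E[N])$ rather than $O(1/E[N])$ --- a harmless extra $\log n$ factor that leaves the polynomial rate $n^{-\eta}$ intact for any $\eta<1-I_\lambda a$.
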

For a proof of the above, see, for example, \citep{hofstad16},
theorems~4.4-4.5.  To prove that the largest connected component in
other random graph sequences has cardinality lying between two multiples of
$\log(n)$, we require again (\ref{eq:IERRC}) for some
$0<\delta< \min(\delta(a,\lambda), \eta(a', \lambda))=:\zeta(\lambda, a, a')$
and $0<a < I_\lambda^{-1} < a'$. We thus define the remotely contiguous domain
of attraction for fragmentation into clusters of maximal cardinality
$I_{\lambda}^{-1}\log(n)$,
\[
\scrQ(\lambda,a,a')=\bigcup\bigl\{
\scrQ(\lambda,\delta):0<\delta<\zeta(\lambda,a, a')\bigr\}.
\]
The union over all $0<\lambda<1$ and $0<a<I_\lambda^{-1}<a'<\infty$
forms the remotely contiguous domain of attraction for fragmentation
into clusters of maximal cardinality of order $\log(n)$:
\[
\scrL = \bigcup
\bigl\{\scrQ(\lambda,a,a')\,:\,0<\lambda <1,\,
0<a<I_\lambda^{-1}<a'<\infty\bigr\}.
\]
\begin{example}{\it (Homogeneous Perturbation of subcritical \ER\ graphs)}\\
	Following reasoning similar to that of example \ref{ex:HomPerSuper}
	and applying corollary~\ref{cor:homper}, we characterise the class
	of \ER\ graphs with maximal connected component of order $\log(n)$,
	which are obtained by homogeneous perturbations of a subcritical
	graphs with $0<\lambda<1$, as,
	\[
	\begin{split}
		\scrL(\lambda) 
		=\bigcup \biggl\{ (P_{\lambda_n,n})\in&\scrH \,:\,
		0<a<I_\lambda^{-1}<a'<\infty,\,\\
		&(\lambda_n-\lambda)^2 < 2\lambda\zeta(\lambda,a,a')
		\frac{\log(n)}{n},\,(n\geq1) \biggr\}.
	\end{split}
	\]
	And we define the homogeneous part of the remotely contiguous domain
	of attraction for fragmentation into clusters of maximal cardinality
	of order $\log(n)$, by $\scrL\cap\scrH=\cup_{0<\lambda<1}\scrL(\lambda)$. 
\end{example}

\begin{remark}
	The above example can be extended to inhomogeneous perturbations
	by application of corollary \ref{cor:inh}. We leave the details to
	the reader.
\end{remark}

\subsection{Maximal connected components in the critical \ER\ graph}
\label{sec:ERcritical}

It is well-known that the largest connected components in
a sequence of \ER\ graphs at criticality ($\lambda=1$) have
cardinalities of order $O(n^{2/3})$. In fact there exists a
so-called \emph{critical window} of $O(n^{-1/3})$
homogeneous perturbations around $\lambda=1$, for which
this critical behaviour of the largest connected component
remains valid.
\begin{theorem}
	\label{thm:ERcrit}
	For some $\theta\in\RR$, every $n\geq1$ and all $1\leq i<j\leq n$,
	define $\lambda_{n,ij}=\lambda_n=1+\theta n^{-1/3}$. There exists a
	constant $b=b(\theta)$ such that,
	\[
	P_{\lambda_n,n}\Bigl(
	a\, n^{2/3} \leq |\scrC_{\text{max}}| \leq a^{-1}\, n^{2/3}\Bigr)
	\geq 1-b\, a,
	\]
	for all $a<1$.
\end{theorem}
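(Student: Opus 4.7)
The plan is to follow the classical exploration-process approach developed by Bollob\'as and Aldous, which is also presented in \citep{hofstad16}. I would analyze the component structure of the graph via a breadth-first search (BFS) started from a uniformly chosen vertex, keeping track of the number of \emph{active} (discovered but unexplored) vertices. Denoting this count by $A_k$ after $k$ exploration steps, the increments satisfy $A_k - A_{k-1} = \eta_k - 1$ where, conditional on the past, $\eta_k$ is binomially distributed with $n - k - A_{k-1}$ trials and success probability $\lambda_n/n$. The connected components of $X^n$ correspond one-to-one with the excursions of $A_k$ above zero, so $|\scrC_{\max}|$ is the length of the longest excursion of $A_k$.

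Next I would rescale: set $\bar A_n(t) = n^{-1/3} A_{\lfloor t n^{2/3}\rfloor}$. Under $\lambda_n = 1 + \theta n^{-1/3}$, the drift of $A_k$ at step $k$ is approximately $\theta n^{-1/3} - k/n$, so on the rescaled time scale $t = k/n^{2/3}$ the drift becomes $\theta - t$. Applying a functional central limit theorem (Lindeberg--Feller in the array form) together with Aldous' argument for convergence of excursion lengths, one obtains weak convergence of $\bar A_n$ to the inhomogeneous diffusion $B(t) + \theta t - \tfrac12 t^2$, $B$ standard Brownian motion, and joint weak convergence of the rescaled ordered component sizes $(n^{-2/3}|\scrC_{(1)}|, n^{-2/3}|\scrC_{(2)}|,\dots)$ to the ordered excursion lengths $(L_1(\theta), L_2(\theta),\dots)$ of this limiting process.

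With the scaling limit in hand, the two required bounds follow from distributional properties of $L_1(\theta)$. The upper bound $P_{\lambda_n,n}(|\scrC_{\max}| > a^{-1}n^{2/3}) \leq c_1(\theta)\,a$ is obtained from Markov's inequality applied to $L_1(\theta)$, exploiting the known estimate $\mathbb{E}[L_1(\theta)] \leq c_1(\theta) < \infty$ (which in turn is established by moment bounds on the cardinality of the component containing a typical vertex in the critical window). The lower bound $P_{\lambda_n,n}(|\scrC_{\max}| < a\,n^{2/3}) \leq c_2(\theta)\,a$ is the more delicate half and requires a Paley--Zygmund (second moment) argument on the number of vertices lying in components of size at least $a\,n^{2/3}$; Aldous shows $P(L_1(\theta) < a) = O(a)$ as $a\downarrow 0$ via an excursion-theoretic identity. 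Combining the two with $b(\theta) = c_1(\theta) + c_2(\theta)$ yields the stated inequality.

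The main obstacle is the lower bound: passing from the scaling limit to the uniform-in-$a$ quantitative estimate $P(L_1(\theta) < a)=O(a)$. A purely weak-convergence argument only delivers $o(1)$ for each fixed $a$, whereas the theorem demands a rate that is linear in $a$ as $a\downarrow 0$. This is handled at the discrete level by computing $\mathbb{E}[Z_{a,n}]$ and $\mathbb{E}[Z_{a,n}^2]$ for the random variable $Z_{a,n}$ counting vertices in components of size in $[a n^{2/3}, a^{-1} n^{2/3}]$, showing that $\mathbb{E}[Z_{a,n}]^2/\mathbb{E}[Z_{a,n}^2]$ is bounded below uniformly in $n$ by $1 - O(a)$, and invoking Paley--Zygmund. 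Everything else is comparatively routine bookkeeping once the exploration process is set up.
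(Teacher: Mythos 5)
The paper does not prove this theorem at all; it is quoted from \citep{hofstad16}, Theorem~5.1, and the cited proof there is a \emph{purely discrete} moment argument: Markov's inequality on $Z_{\geq k}$, the number of vertices in clusters of size $\geq k$, together with a Chebyshev/second-moment bound, where $\EE[Z_{\geq k}]$ and $\var(Z_{\geq k})$ are estimated by comparing the cluster exploration to a near-critical binomial branching process. No diffusion scaling limit enters that proof. Your proposal instead routes through Aldous' functional scaling limit of the exploration process to the reflected process $B(t)+\theta t-\tfrac12 t^2$. That is a genuinely different (and much heavier) piece of machinery, and as you yourself note midway through, it cannot deliver what is claimed: weak convergence of $n^{-2/3}|\scrC_{\max}|$ to $L_1(\theta)$ gives, for fixed $a$, that $P_{\lambda_n,n}(|\scrC_{\max}|>a^{-1}n^{2/3})\to P(L_1(\theta)>a^{-1})$ \emph{as $n\to\infty$}, with no control on how large $n$ must be as a function of $a$. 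The theorem, however, asserts a bound $1-ba$ that is uniform over \emph{all} $n\geq1$ and $a<1$ with a single constant $b=b(\theta)$. Markov's inequality applied to the \emph{limit} $L_1(\theta)$ is therefore not enough; one needs the discrete moment bound $\EE[Z_{\geq k}]\leq c\,n\,k^{-1/2}$ (and the corresponding variance bound) at finite $n$.

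You do acknowledge this gap and fall back to computing $\EE[Z_{a,n}]$ and $\EE[Z_{a,n}^2]$ at the discrete level --- but once you do that you have reproduced van der Hofstad's proof and the entire Aldous scaling limit becomes a redundant preamble; it buys you nothing toward the stated inequality. Two smaller points worth flagging: (i) what you call a Paley--Zygmund step is really a Chebyshev step --- Paley--Zygmund lower-bounds $P(Z>\alpha\EE Z)$, whereas here you need to upper-bound $P(Z_{\geq an^{2/3}}<an^{2/3})$ via $\var/\EE^2$; and (ii) for the upper half, the relevant chain is $P(|\scrC_{\max}|>k)\leq k^{-1}\EE[Z_{>k}]=k^{-1}n\,P(|\scrC(v)|>k)$, followed by a branching-process domination, not an appeal to $\EE[L_1(\theta)]<\infty$. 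In short: the approach you sketch is internally self-correcting into the right proof, but as written the scaling-limit scaffolding is an unnecessary detour and the initial Markov-on-$L_1$ step does not, on its own, establish the claim.
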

For a proof of the above, see, for example, \citep{hofstad16}, theorem~5.1.

Below, we examine to which extent the remotely contiguous domain of
attraction for occurrence of a maximal connected component of order
(approximating) $n^{2/3}$ around the critical point $\lambda=1$,
coincides with the perturbations of order $n^{-1/3}$ in the parameter
$\lambda$ that theorem~\ref{thm:ERcrit} guarantees.

To re-formulate the question: for some $\lambda_n\to1$,
define the homogeneous \ER\ graphs $Y_n$ distributed
according to $Q_n=P_{\lambda_n,n}$, $P_n=P_{1,n}$ and
analyse the requirement,
\[
Q_n\contig \omega_n P_n,
\] 
for any rate $a_n=1/\omega_n$.

To render the assertion of theorem~\ref{thm:ERcrit} at $\lambda=1$
amenable to extension by remote contiguity, we have to make a
choice for a sequence $a_n\to0$: applied to $\lambda=1$,
theorem~\ref{thm:ERcrit} guarantees that there exists a constant
$b=b(0)>0$ such that
\begin{equation}
	\label{eq:ERcritmaxcluster}
	P_{1,n}\Bigl( |\scrC_{\text{max}}| < a_n n^{2/3}\,\,
	\text{or} \,\, |\scrC_{\text{max}}| > a_n^{-1} n^{2/3} \Bigr)
	\leq {b}\,a_n.
\end{equation}
We examine the family of perturbed \ER\ graphs that displays the
same $a_n$-adjusted critical maximal cluster size of order
$n^{2/3}$.

The choice for $(a_n)$ is of great influence on the maximal
permitted perturbation $|\lambda_n-1|$.
\begin{lemma}
	\label{lem:aff}
	Let $\lambda_n\to1$ as $n\to\infty$, such that $\lambda_n-1=O(n^{-1/3})$,
	then there exists a constant $A>0$, such that:
	\[
	\alpha(P_{\lambda_n,n}, P_{1,n})
	=A\exp\left(-\ft1{16}n(\lambda_n-1)^2\right)+o(1).
	\]
\end{lemma}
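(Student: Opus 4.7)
The plan is to exploit that, under both $P_{1,n}$ and $P_{\lambda_n,n}$, all edges are i.i.d.\ Bernoulli, so by the product formula \eqref{eq:hellaffER}
\[
\alpha(P_{\lambda_n,n},P_{1,n}) = \alpha_n^{\binom{n}{2}},
\quad\text{where}\quad
\alpha_n := \frac{\sqrt{\lambda_n}}{n} + \sqrt{\Bigl(1-\frac{1}{n}\Bigr)\Bigl(1-\frac{\lambda_n}{n}\Bigr)}.
\]
The problem reduces to computing $\binom{n}{2}\log\alpha_n$ to precision $o(1)$.

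First I would Taylor-expand $\sqrt{(1-1/n)(1-\lambda_n/n)} = \sqrt{1-(1+\lambda_n)/n+\lambda_n/n^2}$ in powers of $1/n$ and combine with $\sqrt{\lambda_n}/n$. Two algebraic simplifications collect terms: the identity $2\sqrt{\lambda_n}-(1+\lambda_n) = -(\sqrt{\lambda_n}-1)^2$ handles the $1/n$ order, while $4\lambda_n-(1+\lambda_n)^2 = -(\lambda_n-1)^2$ handles the $1/n^2$ order. Writing $\epsilon_n := \lambda_n-1 = O(n^{-1/3})$, this yields
\[
\alpha_n = 1 - \frac{(\sqrt{\lambda_n}-1)^2}{2n} - \frac{\epsilon_n^2}{8n^2} + O(n^{-3}).
\]
Expanding $(\sqrt{1+\epsilon_n}-1)^2 = \epsilon_n^2/4 - \epsilon_n^3/8 + O(\epsilon_n^4)$ and substituting gives
\[
\alpha_n = 1 - \frac{\epsilon_n^2}{8n} + \frac{\epsilon_n^3}{16n} + O\bigl(\epsilon_n^4/n\bigr) + O(n^{-3}).
\]

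Next I take logarithms (the second-order correction $O((1-\alpha_n)^2)$ contributes $O(\epsilon_n^4/n^2)$) and multiply by $\binom{n}{2}=n(n-1)/2$. Under the hypothesis, $\binom{n}{2}\cdot O(\epsilon_n^4/n) = O(n\epsilon_n^4) = O(n^{-1/3}) = o(1)$, and similarly $\binom{n}{2}\cdot O(n^{-3}) = O(n^{-1}) = o(1)$, while the surplus from $(n-1)$ versus $n$ produces $O(\epsilon_n^2) = o(1)$. Collecting,
\[
\binom{n}{2}\log\alpha_n = -\frac{n\epsilon_n^2}{16} + \frac{n\epsilon_n^3}{32} + o(1).
\]

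The main obstacle is the cubic correction $n\epsilon_n^3/32$, which under $\epsilon_n = O(n^{-1/3})$ is only $O(1)$ and need not vanish. I would resolve this by a dichotomy on the scale of $\epsilon_n$. If $n\epsilon_n^2 = O(1)$, then $\epsilon_n = O(n^{-1/2})$ forces $n\epsilon_n^3 = O(n^{-1/2}) = o(1)$; exponentiating and using boundedness of $\exp(-n\epsilon_n^2/16)$ gives $\alpha(P_{\lambda_n,n},P_{1,n}) = \exp(-n\epsilon_n^2/16)(1+o(1)) = \exp(-n\epsilon_n^2/16)+o(1)$, so $A=1$ works. If instead $n\epsilon_n^2 \to \infty$, then both $\alpha(P_{\lambda_n,n},P_{1,n}) \to 0$ and $\exp(-n\epsilon_n^2/16) \to 0$, and the claimed identity holds trivially for any $A>0$. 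Taking $A=1$ uniformly completes the proof.
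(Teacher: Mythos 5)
Your computation follows essentially the same route as the paper: apply the product formula \eqref{eq:hellaffER}, Taylor-expand the single-edge affinity $\alpha_n$ in powers of $1/n$ and $\epsilon_n = \lambda_n - 1$, and observe that the quadratic contribution produces the $-\tfrac{1}{16}n\epsilon_n^2$ exponent. Where you add value is in the final step: the paper arrives at $\alpha(P_{\lambda_n,n},P_{1,n}) = \exp\bigl(-\tfrac{1}{16}n\epsilon_n^2 + O(n\epsilon_n^3) + O(1)\bigr)$ and then simply writes $= A\exp\bigl(-\tfrac{1}{16}n\epsilon_n^2\bigr) + o(1)$ without indicating how a single constant $A$ is extracted from an $O(1)$ exponent that need not converge. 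Your dichotomy on the scale of $\epsilon_n$ closes this gap cleanly and shows $A=1$ always suffices.

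Two small points to tighten. First, your two regimes ``$n\epsilon_n^2=O(1)$'' and ``$n\epsilon_n^2\to\infty$'' are not mutually exhaustive: $n\epsilon_n^2$ could have both bounded and unbounded subsequences without diverging. Patch this by arguing subsequentially: every subsequence has a further subsequence along which $n\epsilon_n^2$ is either bounded or tends to $\infty$, and in both cases $\alpha(P_{\lambda_n,n},P_{1,n}) - \exp\bigl(-\tfrac{1}{16}n\epsilon_n^2\bigr) \to 0$; hence the difference vanishes along the full sequence. Second, your displayed expansion of $\alpha_n$ silently drops the exact $-\epsilon_n^2/(8n^2)$ term (it is neither $O(\epsilon_n^4/n)$ nor $O(n^{-3})$ without further assumptions), but after multiplying by $\binom{n}{2}$ it contributes only $O(\epsilon_n^2)=o(1)$, so this is a bookkeeping slip rather than a real gap.
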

\begin{proof}
	For $Q_n=P_{\lambda_n,n}$, $P_n=P_{{1},n}$, the Hellinger
	affinity, \cf\ equation~(\ref{eq:hellaffER}), is given by,
	\[
	\begin{split}
		\alpha(P_n,Q_n)
		&= \biggl( \frac{\lambda_n^{1/2}}{n}
		+\Bigl(1-\frac{1}{n}({\lambda_n}+1)+\frac{\lambda_n}{n^2}\Bigr)^{1/2}
		\biggr)^{\binom{n}{2}}\\
		&= \biggl( 1+\frac{(1+(\lambda_n-1))^{1/2}}{n}
		-\frac{\lambda_n+1}{2n}+ O(n^{-2})
		\biggr)^{\binom{n}{2}}\\
		&= \biggl( 1+\frac{1+\ft12(\lambda_n-1)
			-\ft18(\lambda_n-1)^2}{n}
		-\frac{\lambda_n+1}{2n}+O(n^{-1}(\lambda_n-1)^3)+O(n^{-2})
		\biggr)^{\binom{n}{2}}\\
		&= \biggl( 1-\frac{(\lambda_n-1)^2}{8n}
		+O(n^{-1}(\lambda_n-1)^3)+O(n^{-2})
		\biggr)^{\binom{n}{2}}\\
		&= \exp\Bigl(-\ft1{16}n(\lambda_n-1)^2
		+O(n(\lambda_n-1)^3)+O(1)\Bigr)\phantom{\biggl\{}\\
		&= A\exp\bigl(-\ft1{16}n(\lambda_n-1)^2\bigr)+o(1),\phantom{\biggl\{}\\
	\end{split}
	\]
	for some constant $A>0$.
\end{proof}
A slightly more detailed version of the above proof shows that, whenever
$\lambda_n-1=o(n^{-1/2})$, the representation of $\alpha(P_{\lambda_n,n}, P_{1,n})$  in Lemma \ref{lem:aff} holds with $A=1$ and therefore $P_{\lambda_n,1} \contig \,P_{1,n}$, in which case remote contiguity applies
with any $a_n$ decaying to $0$, \eg\ for some small $\ep>0$ and
the choice $a_n=n^{-\ep}$, we find that
\begin{equation}
	\label{eq:RCcritCmax}
	P_{\lambda_n,n}\Bigl( |\scrC_{\text{max}}| < n^{2/3-\ep}\,\,
	\text{or} \,\, |\scrC_{\text{max}}| > n^{2/3+\ep} \Bigr)\to0.
\end{equation}
On the other hand, in light of lemma~\ref{lem:necessary}, if
$n|\lambda_n-1|^2$ goes to $\infty$ fast enough, that is, if
$\alpha_n(P_{\lambda_n,1}, P_{1,n})=o(a_n^{1/2})$,
$(P_{\lambda_n})$ is not $a_n$-remotely contiguous with respect
to $(P_{1,n})$. For example, for some small $\ep>0$ and
the choice $a_n=n^{-\ep}$, we find that, if
\[
\liminf_{n\to\infty}\frac{n(\lambda_n-1)^2}{\log(n)} > 8\ep,
\]
then $(P_{\lambda_n,n})$ is not $n^{-\ep}$-remotely contiguous
with respect to $(P_{1,n})$. An application of corollary~\ref{cor:homper}
allows to further refine the requirement by imposing
\[
a_n =o\left(\exp\left( -\frac{n}{2} (\lambda_n -1)^2 \right)\right).
\]
For example, if we choose $a_n$ to decrease as $\log(n)^{-1}$,
then the above shows that remote contiguity limits the perturbation
to be of smaller order than $\sqrt{\log(\log(n))/n}$.

Unfortunately, remotely contiguous domains of attraction for
near-critical maximal cluster sizes (for example, those intended
in equation~(\ref{eq:RCcritCmax})) have an extent of order
$(n^{-1}\log(n))^{1/2}$, not the order $n^{-1/3}$ that
occurs in theorem~\ref{thm:ERcrit}. So remote contiguity
does not cover the entire range of possible perturbations
that preserve near-critical maximal cluster sizes. If we
impose perturbations proportional to $n^{-1/3}$,
requiring remote contiguity leads to exponential rates
$a_n\sim \exp(-n^{1/3})$, which overwhelms the polynomial
factor in assertion (\ref{eq:ERcritmaxcluster}).

This illustrates a limitation that is important to point
out: remote contiguity makes no distinction between asymptotic
assertions, other than by rate: as long as probabilities
$P_n(A_n)$ converge to zero fast enough, \cf\
(\ref{eq:defrc}), remote contiguity asserts
$Q_n(A_n)=o(1)$, without regard for the further
details involved in the definition of the events $A_n$. 
In the case at hand, when we ask questions regarding the
size of the maximal cluster, there are properties very
specific to homogeneous \ER\ graphs at
criticality, that enable $n^{-1/3}$-proportionality of
the critical window. Lemmas~\ref{lem:testnorc}
and~\ref{lem:aff} demonstrate  that there are other
asymptotic assertions $(B_n)$ with probabilities
$P_{1,n}(B_n)$ of order $o(a_n)$, but with
probabilities $P_{1+O(n^{-1/3}),n}(B_n)$ that do not
go to zero.

\subsection{Asymptotic connectedness in \ER\ graphs}

Recall that any homogeneous \ER\ graph with edge
probability $\lambda_n/n $ is disconnected with high
probability if $\limsup_{n \to \infty }\lambda_n <\infty$
(see \eg\ section~5.3 of \citep{hofstad16}). The results
of this subsection apply to \ER\ graphs with diverging
$(\lambda_n)$, typically of $O(\log(n))$.
\begin{lemma}
	\label{lem:conncect}
	Let $\lambda_n \to \infty$ as $n \to \infty$. If
	$\lambda_n-\log (n) \to -\infty$, then,
	\[
	P_{\lambda_n,n}\left(\scrC_{max}\,\,\text{\rm is connected}
	\right)=O\left(\frac{\lambda_n}{n-\lambda_n}\right)=o(1).
	\]
	If, instead, $\lambda_n-\log (n) \to \infty$, then, 
	\[
	P_{\lambda_n,n}\left(\scrC_{max}\,\,\text{\rm is disconnected}\right)
	=O\left({n^{-1/4}}\right).
	\]
\end{lemma}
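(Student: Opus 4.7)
The plan is to treat the two parts separately via classical connectivity arguments for sparse \ER\ graphs: part (i) via a second-moment argument on the count of isolated vertices, and part (ii) via a union bound over non-trivial vertex bipartitions.

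For part (i), write $p_n=\lambda_n/n$ and let $N_0$ denote the number of isolated vertices in $X^n\sim P_{\lambda_n,n}$. A connected graph on $n\geq 2$ vertices has $N_0=0$, so
\[
P_{\lambda_n,n}\bigl(\scrC_{\text{max}}\text{ is connected}\bigr)\leq P_{\lambda_n,n}(N_0=0).
\]
Direct computation gives $\EE_{\lambda_n,n}[N_0]=n(1-p_n)^{n-1}$ and, for $i\neq j$, $\mathrm{Cov}(\mathbf{1}\{i\text{ iso.}\},\mathbf{1}\{j\text{ iso.}\})=p_n(1-p_n)^{2n-3}$, whence Chebyshev's inequality (equivalently, Paley--Zygmund at zero) yields
\[
P_{\lambda_n,n}(N_0=0)\leq\frac{\mathrm{Var}_{\lambda_n,n}(N_0)}{\EE_{\lambda_n,n}[N_0]^2}=\frac{1-(1-p_n)^{n-1}}{\EE_{\lambda_n,n}[N_0]}+\frac{(n-1)p_n}{n(1-p_n)}.
\]
The second summand is $\leq\lambda_n/(n-\lambda_n)$, providing the stated rate. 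Under $\lambda_n-\log n\to-\infty$, $\EE_{\lambda_n,n}[N_0]\sim n e^{-\lambda_n}\to\infty$, so the first summand is $o(1)$ and is absorbed into the stated order.

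For part (ii), I apply a union bound over non-trivial vertex bipartitions. For $k=1,\ldots,\lfloor n/2\rfloor$, let $A_k$ be the event that some $k$-subset of $[n]$ has no edges to its complement; disconnectedness implies $\bigcup_k A_k$, so
\[
P_{\lambda_n,n}\bigl(\scrC_{\text{max}}\text{ is disconnected}\bigr)\leq\sum_{k=1}^{\lfloor n/2\rfloor}\binom{n}{k}(1-p_n)^{k(n-k)}\leq\sum_{k=1}^{\lfloor n/2\rfloor}\Bigl(\frac{en}{k}\Bigr)^{k}\exp\Bigl(-\frac{\lambda_n k(n-k)}{n}\Bigr).
\]
I would split the sum at a threshold $k^\star=\lfloor n^{3/4}\rfloor$: for $k\leq k^\star$, use $n-k\geq n/2$ to extract a factor $\exp(-\lambda_n k/2)$, producing a geometric tail dominated by its $k=1$ term $n(1-p_n)^{n-1}=O(e^{-(\lambda_n-\log n)})$; for $k>k^\star$, bound $\binom{n}{k}\leq 2^n$ against the super-polynomially small $\exp(-\lambda_n n^{3/4}/2)$. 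Both pieces yield the claimed $O(n^{-1/4})$.

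The main obstacle is part (ii): matching the explicit polynomial rate $O(n^{-1/4})$ requires tuning the split-point $k^\star$ to balance the two pieces of the union bound, and relies on showing that the $k=1$ isolated-vertex term always dominates the sum, which in turn depends delicately on the rate at which $\lambda_n-\log n\to\infty$. Part (i) is the standard second-moment route once the covariance of isolated-vertex indicators is correctly written down.
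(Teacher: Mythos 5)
Your approach is a direct probabilistic proof (second moment on isolated vertices for part~(i), union bound over bipartitions for part~(ii)), whereas the paper's proof is reference-based, invoking specific inequalities in van der Hofstad's monograph and a monotone-coupling truncation $\lambda_n^*=\min(\lambda_n,2\log n)$ for part~(ii). Your method establishes $o(1)$ in both parts, but in neither part does it deliver the explicit rate the lemma asserts, and the claim that it does is where the gaps lie.

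For part~(i), your Chebyshev bound is
\[
P_{\lambda_n,n}(N_0=0)\leq\frac{1-(1-p_n)^{n-1}}{\EE_{\lambda_n,n}[N_0]}+\frac{(n-1)p_n}{n(1-p_n)},
\]
and you assert that the first summand is ``absorbed into the stated order.'' It is not: since $(1-p_n)^{n-1}\leq e^{-(n-1)\lambda_n/n}$, one has $1/\EE_{\lambda_n,n}[N_0]\geq e^{\lambda_n(1-1/n)}/n$, while the stated rate is $\lambda_n/(n-\lambda_n)\asymp\lambda_n/n$. The ratio of the two is of order $e^{\lambda_n}/\lambda_n\to\infty$ because $\lambda_n\to\infty$. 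Thus the $1/\EE[N_0]$ term strictly \emph{dominates} the $\lambda_n/(n-\lambda_n)$ term for all large $n$, and the bound your second-moment method actually yields is $O(e^{\lambda_n}/n)$, which is weaker (larger) than $O(\lambda_n/(n-\lambda_n))$. The $o(1)$ conclusion survives (since $\lambda_n-\log n\to-\infty$ makes $e^{\lambda_n}/n\to0$), but the asserted rate does not follow from this argument.

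For part~(ii), the issue you yourself flag is fatal as stated: the $k=1$ term of the union bound is $n(1-p_n)^{n-1}=O(e^{-(\lambda_n-\log n)})$, and the hypothesis $\lambda_n-\log n\to\infty$ does not imply this is $O(n^{-1/4})$; e.g.\ $\lambda_n=\log n+\log\log n$ gives a $k=1$ term of order $1/\log n$. Tuning $k^\star$ cannot fix this, since the $k=1$ term alone already exceeds $n^{-1/4}$ on this range. The paper sidesteps the problem by truncating via $\lambda_n^*=\min(\lambda_n,2\log n)$, observing that disconnection probabilities are monotone decreasing in the edge probability, and then quoting a sharp two-sided expansion from \citep{hofstad16} (equations (5.3.14), (5.3.21)--(5.3.27)) rather than a crude union bound; reproducing that rate requires those refined Poisson-approximation estimates, not the elementary bipartition bound. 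If you want a self-contained argument matching the paper, you need to import (or re-derive) that finer expansion; the elementary union bound gives you only $O(e^{-(\lambda_n-\log n)}+n^{-c})$ for some $c>0$, not $O(n^{-1/4})$ outright.
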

\begin{proof}
	The first result is a direct consequence of the first
	inequality in proposition~5.10 and equations~(5.3.25)--(5.3.26)
	in \citep{hofstad16}. As for the second result, with
	$\lambda_n^*=\min(\lambda_n, 2 \log (n))$,
	\begin{equation}
		P_{\lambda_n,n}\left(\scrC_{max}\text{ is disconnected}\right)
		\leq 1-P_{\lambda_n^*,n}\left(\scrC_{max}\text{ is connected}\right) .
	\end{equation}
	Then the conclusion follows by using equations~(5.3.14),
	(5.3.21)--(5.3.24) and~(5.3.27) in \citep{hofstad16} with
	$\lambda=\lambda_n^*$.
\end{proof}
\begin{example}{\it (Connectivity in inhomogeneous \ER\ graphs)}\\
	Consider an inhomogeneous {\ER} graph with edge probabilities
	$q_{n,ij}= c_{n,ij}\log(n)/n$. Then, a sufficient condition for
	such a graph to be asymptotically connected is the existence of
	a suitable sequence $(d_n)$, with $d_n>0$, $\liminf_{n \to \infty}d_n>1$ and
	$\lim_{n \to \infty} d_n \log(n)/n <1$. To see this, also assume that
	\[
	\sup_{i<j}\left|\frac{c_{n,ij}}{d_n}-1\right| \to 0,
	\quad  \limsup_{n \to \infty}
	\frac{\sum_{i<j}(c_{n,ij}-d_n)^2}{d_n(n- \log (n))}<\frac{1}{4}.
	\]
	Then the assumptions  $r_n=o(1)$ and $a_n=o(\exp(-R_n))$ of
	lemma~\ref{lem:ER} are satisfied, with $\mu_{n,ij}=d_n \log(n)$
	and $a_n=n^{-\delta}$, for some $\delta <1/4$. Hence
	$Q_n\contig n^{\delta}P_n$, where $P_n$ is the distribution of
	the homogeneous {\ER} graph with edge-probability $d_n \log(n)/n$.
	By lemma \ref{lem:conncect}, the latter has probability of not
	being connected of order $O(n^{-1/4})$, thus entailing that
	\[
	Q_n\left(\scrC_{max}\text{ is disconnected}\right)=o(1)
	\] 
	as $n\to \infty$ by remote contiguity. 
\end{example}


\section{Conclusions and discussion}
\label{sec:concdisc}

In what precedes, we have attempted to highlight how remote contiguity
can be used to generalize asymptotic properties, much like asymptotic
equivalence and contiguity, but with a wider range of applicability.
Particularly, we have shown that remote contiguity can be applied to
the connectivity properties of \ER\ graphs, in various regimes of edge
sparsity. Conditions are formulated for the defining parameters of the
random graph enabling remote contiguity and the generalization of 
asymptotic properties.

It is expected that remote contiguity proves helpful for the
generalization of other asymptotic random graph properties. For
example, it is known that the degree sequence of \ER\ graphs
distributed according to $P_{\lambda,n}$, for some $\lambda>0$,
converges to a Poisson distribution. Below we denote 
$p_k= e^{-\lambda}\lambda^k/k!$, ($k\geq1$), and 
$P_k^{(n)}(X^n)= n^{-1} \sum_{i=1}^n 1_{D_i(X^n)=k}$ for the
empirical degree distribution. 
\begin{proposition}
	For any $\lambda>0$ and \ER\ graphs $X^n\sim P_{\lambda,n}$,
	we have
	\begin{equation}
		\label{eq:conv1}
		P_{\lambda,n}\left( \max_{k\geq 1} |P_{k}^{(n)}(X_n)-p_k|>\varepsilon_n
		\right)=O(1/(n\varepsilon_n^2))
	\end{equation}
	as $n \to \infty$, for any $\varepsilon_n \downarrow 0$ such that
	$n \varepsilon_n \to \infty$.
\end{proposition}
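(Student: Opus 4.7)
The plan is to bound the $\ell^{\infty}$-error by the $\ell^{2}$-error and then apply Markov's inequality. Concretely, if one shows that
\[
\EE_{P_{\lambda,n}}\sum_{k\geq 0}\bigl(P_k^{(n)}(X^n)-p_k\bigr)^2=O(1/n),
\]
then, since $\max_{k\geq 1}(P_k^{(n)}-p_k)^2\leq \sum_{k\geq 0}(P_k^{(n)}-p_k)^2$, Markov's inequality yields the required $O(1/(n\varepsilon_n^2))$ bound.

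I would split the expectation on the left into a variance and a bias part,
\[
\EE\sum_{k}(P_k^{(n)}-p_k)^2=\sum_{k}\var(P_k^{(n)})+\sum_{k}\bigl(\EE P_k^{(n)}-p_k\bigr)^2,
\]
and handle them separately. The bias sum is the easier piece: by exchangeability $\EE P_k^{(n)}=P_{\lambda,n}(D_1(X^n)=k)$, with $D_1(X^n)\sim\mathrm{Bin}(n-1,\lambda/n)$, so Le Cam's Poisson-approximation inequality gives $\sum_k|\EE P_k^{(n)}-p_k|=O(1/n)$, and hence $\sum_k(\EE P_k^{(n)}-p_k)^2=O(1/n^2)$ by $\ell^2\leq \ell^1$.

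The variance sum is the main step. By exchangeability,
\[
\var(P_k^{(n)})=n^{-1}\var(1_{\{D_1=k\}})+(1-n^{-1})\,\mathrm{Cov}(1_{\{D_1=k\}},1_{\{D_2=k\}}),
\]
and summing the diagonal term over $k$ gives at most $1/n$. For the covariance I would exploit that $D_1$ and $D_2$ share only the edge $(12)$: writing $D_i=X_{12}+D_i'$ with $D_i'=\sum_{\ell\neq 1,2}X_{i\ell}$ for $i\in\{1,2\}$, the triple $(X_{12},D_1',D_2')$ is jointly independent with $D_1',D_2'$ \iid\ $\mathrm{Bin}(n-2,\lambda/n)$. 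A one-line expansion then gives
\[
\mathrm{Cov}(1_{\{D_1=k\}},1_{\{D_2=k\}})=\frac{\lambda}{n}\Bigl(1-\frac{\lambda}{n}\Bigr)\bigl(P(D_1'=k-1)-P(D_1'=k)\bigr)^2,
\]
and summing over $k$, using $\sum_k(a_k-b_k)^2\leq 2\sum_k a_k^2+2\sum_k b_k^2$ together with $\sum_k P(D_1'=k)^2\leq 1$, gives an off-diagonal contribution of order $1/n$. Hence $\sum_k\var(P_k^{(n)})=O(1/n)$, which completes the plan.

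The only mildly delicate step is the short calculation yielding the covariance identity; the rest is routine, and I do not anticipate a substantial obstacle.
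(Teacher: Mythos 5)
Your argument is correct and follows the same bias--variance plus Markov/Chebyshev strategy that the paper uses; the paper simply invokes the bias bound $\sum_k|\EE P_k^{(n)}-p_k|=O(1/n)$ together with equation~(5.4.17) of van der Hofstad (2016) for the second-moment concentration, whereas you re-derive that concentration from scratch via the exact covariance identity $\mathrm{Cov}(1_{\{D_1=k\}},1_{\{D_2=k\}})=\frac{\lambda}{n}\bigl(1-\frac{\lambda}{n}\bigr)\bigl(P(D_1'=k)-P(D_1'=k-1)\bigr)^2$. All your estimates (the diagonal variance sum $\leq 1/n$, the off-diagonal sum $\leq 4\lambda/n$ via $\sum_k P(D_1'=k)^2\leq 1$, and the bias term $O(1/n^2)$ via $\ell^2\leq(\ell^1)^2$) check out, so your proof is a correct, self-contained version of the paper's.
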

\begin{proof}
	The result in \eqref{eq:conv1} follows immediately from the inequality
	\[
	\sum_{k \geq 0} |\EE_{P_{\lambda,n}}(P_k^{(n)}(X^n))- p_k| < 2\varepsilon_n,
	\] 
	valid for all large $n$, and equation~(5.4.17) in \citep{hofstad16}.
\end{proof}
Application of remote contiguity enables the following generalization.
\begin{corollary}
	For any \ER\ graph $Y^n$ of law $Q_n=P_{(\lambda_{n,ij}),n}$
	satisfying $\sup_{i<j}|\lambda_{n,ij}-\lambda|=o(1)$ and
	$\sum_{i<j}|\lambda_{n,ij}-\lambda|^2< n\lambda\delta\log(n\varepsilon_n^2)$,
	with $0<\delta<1$, as $n \to \infty$ we have
	\begin{equation}
		\label{eq:conv2}
		Q_n\left(
		\max_{k\geq 1} |P_{k}^{(n)}(Y_n)-p_k|>\varepsilon_n\right)=o(1).
	\end{equation}
\end{corollary}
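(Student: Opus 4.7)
The plan is to transfer the asymptotic smallness of the bad event from the homogeneous law $P_n = P_{\lambda,n}$ to $Q_n$ via remote contiguity. Write $A_n = \{\max_{k\geq 1}|P_k^{(n)}(Y^n) - p_k| > \varepsilon_n\}$; the proposition gives $P_n(A_n) = O(1/(n\varepsilon_n^2))$. The bound, and the hypothesis on $\|\lambda_n - \lambda\|_{2,n}^2$, are only meaningful when $n\varepsilon_n^2 \to \infty$, which I take as standing (the complementary case makes the statement vacuous).

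I fix an auxiliary $\delta' \in (\delta, 1)$ and pick the rate $a_n = (n\varepsilon_n^2)^{-\delta'}$. Then $a_n \downarrow 0$ and
\[
P_n(A_n) = O\bigl((n\varepsilon_n^2)^{-1}\bigr) = o\bigl((n\varepsilon_n^2)^{-\delta'}\bigr) = o(a_n),
\]
so it suffices to establish $Q_n \contig a_n^{-1} P_n$.

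For this I appeal to Corollary~\ref{cor:inh}. The uniform bound $\|\lambda_n - \lambda\|_{\infty,n} = o(1)$ is given by hypothesis. If $n^{-1}\|\lambda_n - \lambda\|_{2,n}^2 = O(1)$, the corollary yields ordinary contiguity $Q_n \contig P_n$, so $P_n(A_n) = o(1)$ alone forces $Q_n(A_n) = o(1)$. Otherwise, along subsequences on which $n^{-1}\|\lambda_n - \lambda\|_{2,n}^2 \to \infty$, the remote-contiguity clause of the corollary delivers $Q_n \contig a_n^{-1}P_n$ provided $\log(a_n) + \|\lambda_n - \lambda\|_{2,n}^2/(n\lambda) \to -\infty$. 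Using the hypothesis $\|\lambda_n - \lambda\|_{2,n}^2 < n\lambda\delta\log(n\varepsilon_n^2)$ together with $\log(a_n) = -\delta'\log(n\varepsilon_n^2)$, this sum is bounded above by $(\delta - \delta')\log(n\varepsilon_n^2)$, which diverges to $-\infty$ as required, since $\delta' > \delta$ and $n\varepsilon_n^2 \to \infty$. A routine subsequence argument combines the two cases.

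The only non-bookkeeping ingredient is the buffer $\delta' > \delta$: the strict inequality in the hypothesis at each finite $n$ does not by itself guarantee the required divergence in the remote-contiguity rate criterion, and the polynomial slack $(n\varepsilon_n^2)^{-\delta'}$ in place of $(n\varepsilon_n^2)^{-\delta}$ is exactly what manufactures it. Beyond this, the argument is a direct chaining of the proposition with Corollary~\ref{cor:inh}, and I expect no serious obstacles.
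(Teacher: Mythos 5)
Your proof is correct and takes essentially the same route as the paper, which simply asserts that the result ``immediately follows'' from the proposition and Corollary~\ref{cor:inh}; your contribution is to supply the details that make this immediacy precise, namely the auxiliary exponent $\delta'\in(\delta,1)$ turning the strict but possibly slack hypothesis into the required divergence of $\log(a_n)+\|\lambda_n-\lambda\|_{2,n}^2/(n\lambda)$, the observation that $\delta<1$ leaves room for $\delta'<1$ so that $P_n(A_n)=O((n\varepsilon_n^2)^{-1})=o(a_n)$, and the subsequence dichotomy handling bounded versus diverging $n^{-1}\|\lambda_n-\lambda\|_{2,n}^2$.
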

\begin{proof}
	The result in \eqref{eq:conv2} immediately follows from \eqref{eq:conv1}
	and an application of corollary \ref{cor:inh}.
\end{proof}
More ambitious forms of generalization are conceivable: for example, one
could consider the so-called preferential attachment graph, which displays
a degree distribution with heavy tails asymptotically (see, \cite{hofstad16},
theorem~8.3). Dependence of edges makes the analysis more demanding
technically, but the machinery of remote contiguity continues to apply.
Thus one may study the extent to which the model of (\cite{hofstad16},
equation~(8.2.1)) may be perturbed, without influencing the asymptotic
tail behaviour of the degree distribution.

But the application of remote contiguity is not limited to random
graphs; generalization of \emph{any} asymptotic property in \emph{any}
sequence of probabilistic models can be analysed with remote
contiguity. To illustrate this, we note that for two sequences
$(P_n)$ and $(Q_n)$ on measurable spaces $(\scrX_n,\scrB_n)$,
we have $Q_n\contig a^{-1}_n P_n$, if for every $\ep>0$ there
exists a $\delta>0$ such that
\[
Q_n\Bigl(\frac{dP_n}{dQ_n}<\delta a_n\Bigr)<\ep,
\] 
(or, equivalently, if every subsequence of $(a_n(dP_n/dQ_n)^{-1})$
has a weakly converging subsequence). Lemma~\ref{lem:rcfirstlemma}
gives a variety of general conditions to establish remote
contiguity, analogous to \emph{Le~Cam's First Lemma}
(\cite{LeCam86}, chapter~3, section~3, proposition~3).
Moreover, the arguments of subsection~\ref{sub:IERRCnecc}
are fully general, so there are also general conditions to
exclude remote contiguity, for example, if the Hellinger affinity
decreases to zero fast enough:
\[
\alpha(P_n,Q_n) = o\bigl( a_n^{1/2} \bigr).
\]
We therefore express the hope that remote contiguity can be applied
in more general examples besides random graphs, in a role
that generalizes the role of contiguity.


\appendix

\section{Remote contiguity}
\label{app:rc} 

Remote contiguity was introduced
in \citep{Kleijn21} to demonstrate that asymptotic properties of
Bayesian posterior distributions can be lifted to frequentist
statements of asymptotic consistency, hypothesis testing, model
selection and uncertainty quantification. For another statistical
example in the setting of extreme-value theory,
\citep{Falk20,Padoan22} use remote contiguity to
prove consistency with respect to relatively complicated
true data distributions by simpler, approximating sequences of
max-stable distributions.

Here and elsewhere, $M^1(\scrX)$ denotes the collection of all
probability measures on a measurable space $(\scrX,\scrB)$.
\begin{definition}
	\label{def:remctg}
	Given measurable spaces $(\scrX_n,\scrB_n)$ with two
	sequences of probability measures $P_n,Q_n\in M^1(\scrX_n)$
	for all $n\geq1$, and a sequence $\rho_n\downarrow0$, we say that
	$Q_n$ is $\rho_n$-remotely contiguous with respect to $P_n$,
	notation $Q_n\contig \rho_n^{-1}P_n$, if,
	\[
	P_n\phi_n(X^n) = o(\rho_n)
	\quad\Rightarrow\quad Q_n\phi_n(X^n)=o(1),
	\]
	for every sequence of $\scrB_n$-measurable $\phi_n:\scrX_n\rightarrow[0,1]$.
\end{definition}
Given two sequences $(P_n)$ and $(Q_n)$, contiguity $P_n\ctg Q_n$ is
equivalent to remote contiguity $P_n\ctg a_n^{-1} Q_n$ for all
$a_n\downarrow0$.

The following is the remotely contiguous analogue of Le~Cam's
First Lemma (\cite{LeCam86}, section~3.3, proposition 3).
\begin{lemma}
	\label{lem:rcfirstlemma}
	Let probability measures $(P_n)$, $(Q_n)$ on measurable spaces
	$(\scrX_n,\scrB_n)$ and $a_n\downarrow0$ be given, then
	$Q_n\contig a_n^{-1}P_n$ if any of the following hold:
	\begin{itemize}
		\item[(i)] for any bounded, $\scrB_n$-msb.
		$T_n:\scrX_n\rightarrow[0,1]$,
		$a_n^{-1}T_n\conv{P_n}0$ $\Rightarrow$ $T_n\convprob{Q_n}0$,
		\item[(ii)] for any $\ep>0$, there is a $\delta>0$ such that
		$Q_n(dP_n/dQ_n<\delta\,a_n)<\ep$, for large enough $n$,
		\item[(iii)] there is a $b>0$ such that
		$\liminf_{n} b\,a_n^{-1}P_n(dQ_n/dP_n>b\,a_n^{-1})=1$,
		\item[(iv)] for any $\ep>0$, there is a constant $c>0$ such that
		$\|Q_n-Q_n\wedge c\,a_n^{-1}P_n\|<\ep$, for large enough $n$,
		\item[(v)] 
		under $Q_n$ every subsequence of $(a_n(dP_n/dQ_n)^{-1})$
		has a weakly convergent subsequence.
	\end{itemize}
\end{lemma}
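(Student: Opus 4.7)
The plan is to single out (ii) as the pivotal characterization. First I will show (ii) directly implies remote contiguity; then each of (i), (iii), (iv), (v) will be reduced either to (ii) or to an analogous splitting argument, mirroring the organization of Le~Cam's First Lemma.

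\emph{(ii) $\Rightarrow$ remote contiguity.} Given a $\scrB_n$-measurable $\phi_n\colon\scrX_n\to[0,1]$ with $P_n\phi_n=o(a_n)$, fix $\ep>0$ and pick $\delta>0$ as in (ii). On the support of $Q_n$, set $A_n=\{dP_n/dQ_n\geq\delta a_n\}$. A change of measure on $A_n$ gives
\[
Q_n(\phi_n 1_{A_n})\leq(\delta a_n)^{-1}P_n\phi_n=\delta^{-1}o(1),
\]
while $Q_n(\phi_n 1_{A_n^c})\leq Q_n(A_n^c)=Q_n(dP_n/dQ_n<\delta a_n)<\ep$ for large $n$. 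Since $\ep$ is arbitrary, $Q_n\phi_n\to0$.

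\emph{Reductions to (ii) or direct arguments for (i), (iv), (v).} For (v): Prohorov's theorem recasts subsequential weak relative compactness as uniform $Q_n$-tightness of $(a_n(dP_n/dQ_n)^{-1})$, which is exactly (ii) with $\delta=1/M$. For (i): Markov yields $P_n(a_n^{-1}\phi_n>\eta)\leq\eta^{-1}a_n^{-1}P_n\phi_n\to0$ for every $\eta>0$, so $a_n^{-1}\phi_n\to0$ in $P_n$-probability; applying (i) with $T_n=\phi_n$ gives $\phi_n\to0$ in $Q_n$-probability, and boundedness ($\phi_n\in[0,1]$) upgrades this to $Q_n\phi_n\to0$. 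For (iv): the inequality
\[
Q_n\phi_n\leq(Q_n\wedge c\,a_n^{-1}P_n)\phi_n+\|Q_n-Q_n\wedge c\,a_n^{-1}P_n\|\leq c\,a_n^{-1}P_n\phi_n+\ep
\]
immediately forces $\limsup Q_n\phi_n\leq\ep$.

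\emph{(iii) $\Rightarrow$ remote contiguity} is the main obstacle. Its phrasing is deceptive: Markov always gives $b\,a_n^{-1}P_n(B_n)\leq1$ for $B_n=\{dQ_n/dP_n>b\,a_n^{-1}\}$, so the hypothesis $\liminf=1$ is a saturation statement rather than a smallness one. The key consequence to extract, using $Q_n(B_n)\leq1$, is
\[
\int_{B_n}\bigl(dQ_n/dP_n-b\,a_n^{-1}\bigr)\,dP_n=Q_n(B_n)-b\,a_n^{-1}P_n(B_n)\to0.
\]
For any $M>b$ and $C_n=\{dQ_n/dP_n>M\,a_n^{-1}\}\subset B_n$, the pointwise bound $dQ_n/dP_n-b\,a_n^{-1}>(M-b)a_n^{-1}$ on $C_n$ forces $(M-b)a_n^{-1}P_n(C_n)\to0$, i.e.\ $P_n(C_n)=o(a_n)$; the same identity applied to $C_n$ then yields $Q_n(C_n)\leq\int_{C_n}(dQ_n/dP_n-b\,a_n^{-1})\,dP_n+b\,a_n^{-1}P_n(C_n)=o(1)$. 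Splitting $Q_n\phi_n$ at $C_n$ and $C_n^c$, change of measure on $C_n^c$ gives $Q_n(\phi_n 1_{C_n^c})\leq M\,a_n^{-1}P_n\phi_n=o(1)$, and $Q_n(\phi_n 1_{C_n})\leq Q_n(C_n)=o(1)$, completing the proof.
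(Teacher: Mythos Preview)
The paper does not actually prove this lemma: it is stated in the appendix without proof, deferring to the reference \citep{Kleijn21}, so there is no in-paper argument to compare against. Your proof is essentially correct and well organized around characterization~(ii), with one point in part~(iii) that deserves a small patch.

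In your treatment of (iii) you write $\int_{B_n}(dQ_n/dP_n)\,dP_n=Q_n(B_n)$ and later invoke a change of measure on $C_n^c$; both steps tacitly assume $Q_n\ll P_n$. In general $\int_{B_n}(dQ_n/dP_n)\,dP_n$ equals only $Q_n^{ac}(B_n)$, the $P_n$-absolutely-continuous part, and the singular part $Q_n^s$ may sit on $C_n^c$. The repair is already latent in your saturation step: since on $B_n$ one has $dQ_n/dP_n>b\,a_n^{-1}$,
\[
b\,a_n^{-1}P_n(B_n)\leq Q_n^{ac}(B_n)\leq Q_n^{ac}(\scrX_n)\leq1,
\]
and the hypothesis $b\,a_n^{-1}P_n(B_n)\to1$ forces $Q_n^{ac}(\scrX_n)\to1$, hence $Q_n^s(\scrX_n)\to0$. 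With this recorded, your splitting at $C_n$ goes through verbatim after replacing $Q_n$ by $Q_n^{ac}$ in the identities and adding a single $Q_n^s(\scrX_n)=o(1)$ term to absorb the singular mass. The arguments for (i), (ii), (iv), (v) are clean as written.
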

\begin{remark}
	For any measurable space $(\scrX,\scrB)$, the definition of
	$(dP/dQ)^{-1}:\scrX\to(0,\infty]:x\mapsto 1/(dP/dQ(x))$ is
	$Q_n$-almost-sure: given a (sigma-finite) measure $\nu$ that
	dominates both $P$ and $Q$ (\eg\ $\nu=P+Q$), denote
	$dP/d\nu=p$ and $dQ/d\nu=q$. Then the measurable map
	$p/q\,1\{q>0\}:\scrX\rightarrow[0,\infty)$ is a
	$\nu$-almost-everywhere version of $dP/dQ$, and
	$q/p\,1\{q>0\}:\scrX\rightarrow[0,\infty]$ defines
	$(dP/dQ)^{-1}$ $Q$-almost-surely.
\end{remark}
Characterization
{\it (v)} provides the most insightful formulation, relating
remote contiguity to weak convergence of re-scaled likelihood ratios,
\cf\ \citep{LeCam86}.
In most applications, characterization {\it (ii)} is the most
practical to demonstrate remote contiguity.


\section{The Lindeberg-Feller theorem}
\label{app:weakconvergence}

In its most basic form, the Lindeberg-Feller theorem formulates a condition
for the convergence of sums of independent (but not necessarily
identically distributed) random variables to a central limit. There exist
versions for dependent random variables too. A \emph{triangular array}
consists of a sequence $(k(n))$ that increases to infinity, and
random variables $X_{n,k}$, where $n\geq1$ and $1\leq k\leq k(n)$.
\begin{theorem}
	\label{thm:lindebergfeller}
	For each $n\geq1$, let $X_{n,k}$, $1\leq k\leq k(n)$, be independent, with
	expectations $\mu_{n,k}\in\RR$ and variances $\sigma_{n,k}^2<\infty$.
	With $s_n^2=\sum_{k=1}^{k(n)}\sigma_{n,k}^2$, assume that, for every
	$\ep>0$,
	\[
	\frac{1}{s_n^2}\sum_{k=1}^{k(n)}
	\EE \|X_{n,k}-\mu_{n,k}\|^2 1_{\{\|X_{n,k}-\mu_{n,k}\|>\ep s_n\}}\to 0.
	\]
	Then $s_n$-normalized, $\mu_{n,k}$-centred sums converge weakly to the
	standard normal distribution,   
	\[
	\frac{1}{s_n}\sum_{k=1}^{k(n)} \bigl( X_{n,k} - \mu_{n,k} \bigr)
	\convweak{}N(0,1).
	\]
\end{theorem}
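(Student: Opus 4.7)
My plan is to give the standard characteristic function proof, using Lévy's continuity theorem to reduce weak convergence to pointwise convergence of characteristic functions to $e^{-t^2/2}$.

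After centering (replace $X_{n,k}$ by $X_{n,k}-\mu_{n,k}$) and rescaling (set $Y_{n,k} = X_{n,k}/s_n$), I would reduce to the case where the summands have mean zero and $\sum_{k=1}^{k(n)}\text{Var}(Y_{n,k})=1$, with the Lindeberg condition taking the form
\[
L_n(\ep) := \sum_{k=1}^{k(n)} \EE\bigl[Y_{n,k}^2\,1_{\{|Y_{n,k}|>\ep\}}\bigr]\to 0
\]
for every $\ep>0$. A first lemma I would prove is the \emph{uniform asymptotic negligibility} of the variances: for any $\ep>0$,
\[
\max_{k}\text{Var}(Y_{n,k}) \leq \ep^2 + L_n(\ep)\to\ep^2,
\]
so $\max_k \sigma_{n,k}^2/s_n^2\to 0$. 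This is the technical fact that powers the rest of the argument.

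Next, for each fixed $t\in\RR$, I would expand the characteristic functions $\varphi_{n,k}(t)=\EE e^{itY_{n,k}}$ using the sharp two-sided Taylor bound
\[
\Bigl|e^{ix}-1-ix+\ft12 x^2\Bigr|\leq \min\bigl(\ft16|x|^3,\,x^2\bigr),
\]
to get
\[
\bigl|\varphi_{n,k}(t)-(1-\ft12 t^2\sigma_{n,k}^2/s_n^2)\bigr|
\leq \EE\bigl[\min(\ft16|tY_{n,k}|^3,\,t^2 Y_{n,k}^2)\bigr].
\]
Splitting the expectation on $\{|Y_{n,k}|\leq \ep\}$ and its complement and summing over $k$ gives a bound of the form $\ft16|t|^3\ep + t^2 L_n(\ep)$, which is arbitrarily small by first choosing $\ep$ then letting $n\to\infty$. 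Hence $\sum_k(\varphi_{n,k}(t)-1)\to -t^2/2$ and, using $|\varphi_{n,k}(t)-1|\leq \ft12 t^2\sigma_{n,k}^2/s_n^2\to 0$ uniformly in $k$, the standard complex-analytic lemma $|\prod z_k-\prod w_k|\leq \sum|z_k-w_k|$ for $|z_k|,|w_k|\leq 1$ together with $|e^z-(1+z)|\leq |z|^2$ for small $z$ yields
\[
\prod_{k=1}^{k(n)}\varphi_{n,k}(t)\to e^{-t^2/2}.
\]
Lévy's continuity theorem then finishes the argument.

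The main obstacle is not any single step but the accounting in the Taylor-expansion estimate: one has to make sure that the cubic error on $\{|Y_{n,k}|\leq\ep\}$ and the quadratic error on $\{|Y_{n,k}|>\ep\}$ combine to an $o(1)$ quantity that can be made uniformly small in $t$ on compact intervals, and to verify that uniform negligibility of variances indeed follows from the Lindeberg condition (since this is what legitimizes both the Taylor expansion and the passage from sums of logarithms to products). Once that bookkeeping is done, the rest is classical and mechanical.
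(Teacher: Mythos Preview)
Your proposal is the standard characteristic-function proof of the Lindeberg--Feller CLT, and it is correct as outlined. The paper, however, does not prove this theorem at all: it is stated in an appendix as a classical background result, without proof or reference to a specific proof, and is used as a tool in the body of the paper. So there is nothing to compare your argument against; you have supplied a complete proof where the paper simply cites the result.

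Two minor remarks on your write-up. First, L\'evy's continuity theorem only requires pointwise convergence of characteristic functions (with continuity of the limit at $0$, which $e^{-t^2/2}$ obviously has), so you do not need uniformity in $t$ on compacts; your bound $\ft16|t|^3\ep + t^2 L_n(\ep)$ already gives pointwise convergence for each fixed $t$. Second, the inequality $|\varphi_{n,k}(t)-1|\leq \ft12 t^2\sigma_{n,k}^2/s_n^2$ that you invoke for the product step is exactly what uniform asymptotic negligibility buys you, so your identification of that as the key technical lemma is on target.
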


\bibliographystyle{chicago} 
\bibliography{rcrg1}

\end{document}